\setlist[enumerate]{leftmargin=1.5em}
\setlist[itemize]{leftmargin=1.5em}
\providecommand{\bysame}{\leavevmode\hbox to3em{\hrulefill}\thinspace}
\providecommand{\MR}{\relax\ifhmode\unskip\space\fi MR }
\providecommand{\href}[2]{#2}
\definecolor{green}{rgb}{0,0.8,0} 
\newtheorem{maintheorem}{Theorem}
\newtheorem{theorem}{Theorem}[section]
\newtheorem{lemma}[theorem]{Lemma}
\newtheorem{proposition}[theorem]{Proposition}
\theoremstyle{definition}
\theoremstyle{remark}
\newtheorem{remark}[theorem]{Remark}
\numberwithin{equation}{section}
\newcommand{\nrm}[1]{\Vert#1\Vert}
\newcommand{\nnrm}[1]{{\vert\kern-0.25ex\vert\kern-0.25ex\vert #1 
    \vert\kern-0.25ex\vert\kern-0.25ex\vert}}
\newcommand{\supp}{{\mathrm{supp}}\,}
\newcommand{\lap}{\Delta}
\newcommand{\ud}{\mathrm{d}}
\newcommand{\rd}{\partial}
\newcommand{\nb}{\nabla}
\newcommand{\alp}{\alpha}
\newcommand{\bt}{\beta}
\newcommand{\gmm}{\gamma}
\newcommand{\dlt}{\delta}
\newcommand{\eps}{\epsilon}
\newcommand{\lmb}{\lambda}
\newcommand{\Lmb}{\Lambda}
\newcommand{\tht}{\theta}
\newcommand{\omg}{\omega}
\newcommand{\Omg}{\Omega}
\newcommand{\bbR}{\mathbb R}
\newcommand{\bbT}{\mathbb T}
\newcommand{\bbZ}{\mathbb Z}
\begin{document}

\title{Strong illposedness for SQG in critical Sobolev spaces}
\author{In-Jee Jeong\thanks{Department of Mathematics and RIM, Seoul National University. E-mail: injee\_j@snu.ac.kr} \and Junha Kim\thanks{Department of Mathematics, Chung-ang University. E-mail: jha02@cau.ac.kr}} 
\date{\today}



\maketitle


\begin{abstract}
	We prove that the inviscid surface quasi-geostrophic (SQG) equations are strongly ill-posed in critical Sobolev spaces: there exists an initial data $H^{2}(\bbT^2)$ without any solutions in $L^\infty_{t}H^{2}$. Moreover, we prove strong critical norm inflation for $C^\infty$--smooth data. Our proof is robust and extends to give similar ill-posedness results for the family of modified SQG equations which interpolate the SQG with two-dimensional incompressible Euler equations. 
\end{abstract}


\section{Introduction}

\subsection{Main results}

In this paper, we are concerned with the Cauchy problem for the inviscid surface quasi-geostrophic (SQG) equations on $\bbT^2 = (\bbR/\bbZ)^2$
\begin{equation}  \label{eq:SQG} \tag{SQG}
\left\{
\begin{aligned} 
\rd_t\tht + u\cdot \nb \tht = 0, \\
u= \nb^\perp (-\lap)^{-\frac{1}{2}} \tht.
\end{aligned}
\right.
\end{equation} Our first main result shows that \textit{strong norm inflation} occurs for the solution map of \eqref{eq:SQG} in $H^2(\bbT^2)$ with $C^{\infty}$--smooth solutions.
\begin{maintheorem}[Strong norm inflation]\label{thm:main}
	For any $\eps, \dlt, A> 0$, there exists $\tht_{0} \in  C^\infty(\bbT^2) $ satisfying $\nrm{\tht_0}_{H^2 \cap W^{1,\infty}} < \eps$ such that the unique local-in-time smooth solution $\tht$ to \eqref{eq:SQG} with initial data $\tht_{0}$ exists on $[0,\dlt^*]$ for some $0<\dlt^*\le \dlt$ and satisfies 
		$$	\sup_{t\in[0,\dlt^*]} \nrm{\tht(t,\cdot)}_{H^2}>A . $$
\end{maintheorem}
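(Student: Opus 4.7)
The plan is to construct $\theta_0$ as the sum of a ``background'' piece that generates a hyperbolic stagnation-point flow and a small, high-frequency ``corrector'' that is stretched by the background at a logarithmically-enhanced rate. The mechanism exploits the fact that the SQG Biot--Savart law $u=\nb^\perp(-\lap)^{-1/2}\theta$ does not map $H^2$ into $W^{1,\infty}$, so the transport can generate $H^2$-growth at a rate that beats any Gr\"onwall control purely in the critical norm.

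I would first restrict to the class of functions on $\bbT^2$ odd with respect to both coordinate axes; this forces the origin to be a stationary point of the flow. Within this class I pick a smooth, compactly supported profile $\Psi$ whose angular structure near $0$ is tuned so that $u[\Psi]$ has a genuine hyperbolic saddle there and $\nb u[\Psi]$ has a logarithmic divergence of order $\log(1/|x|)$. For small parameters $a,b>0$ and a large integer $M$, set
\[
    \theta_0(x) \;=\; a\,\Psi(x) \;+\; b\,\Xi(Mx),
\]
where $\Xi$ is a fixed smooth odd-odd bump localized near $0$. A short mollification ensures $\theta_0\in C^\infty(\bbT^2)$, and by tuning $a$ small and $bM$ small we have $\nrm{\theta_0}_{H^2\cap W^{1,\infty}}<\eps$. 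Local-in-time smooth well-posedness in a subcritical space (e.g.\ $H^3$ or $C^\infty$) provides a unique smooth solution on some $[0,\dlt^\ast]$.

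To extract the norm inflation I would linearize the evolution of the corrector along the (approximately frozen) background velocity $u[a\Psi]$. Near the saddle, the principal eigenvalues of $\nb u[a\Psi]$ are $\pm\lambda$ with $\lambda\simeq a\log M$: the logarithm is produced by the nonlocal kernel seeing all scales between $M^{-1}$ and $1$. Under the resulting hyperbolic stretching, an initially isotropic wave packet of scale $M^{-1}$ elongates and thins to length $e^{\lambda t}/M$ and width $e^{-\lambda t}/M$, and its $H^2$ norm grows as $\sim (bM)\,e^{2\lambda t}$. Choosing the parameters in the hierarchy $\eps\to a\to M\to b$ ensures that this growth exceeds $A$ at some time $t_\ast$ that can be made smaller than $\dlt$, so $\dlt^\ast\ge t_\ast$ suffices for the stated conclusion.

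The main obstacle is rigorously justifying the heuristic linearization at the \emph{critical} $H^2$ level. Writing $\theta = a\Psi + \theta_{\rm lin} + r$, where $\theta_{\rm lin}$ is the transport of the corrector by the frozen velocity $u[a\Psi]$, the remainder $r$ satisfies an equation whose natural $H^2$ estimate loses one logarithm---precisely of order $\lambda$---through the Biot--Savart commutator, exactly matching the growth rate one is trying to exploit. I would overcome this by using the odd-odd symmetry together with the specific angular structure of $\Psi$ to cancel the leading-order singular contribution in the $r$-equation, so that the effective stretching rate for $r$ is strictly smaller than $\lambda$ and $r$ remains negligible compared to $\theta_{\rm lin}$ on the entire inflation window $[0,t_\ast]$. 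A final approximation/rescaling step yields the $C^\infty$ datum satisfying the claimed estimates.
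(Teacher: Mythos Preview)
Your proposal has a genuine gap at its core: the mechanism you describe cannot produce the claimed logarithmic stretching rate, and even if it could, the remainder control you sketch is not justified.

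First, if $\Psi$ is genuinely smooth (as you state), then $\nabla u[\Psi]$ is bounded: the SQG map $\theta\mapsto\nabla u$ is a zero-order Calder\'on--Zygmund operator, so a $C^\infty$ background gives $C^\infty$ velocity gradient with no singularity at the origin. Concretely, for smooth odd--odd $\Psi$ one has $\Psi(y)=cy_1y_2+O(|y|^4)$ near $0$, and the Key Lemma integral $\int_{Q(x)}\frac{y_1y_2}{|y|^5}\Psi(y)\,dy$ is then uniformly bounded as $x\to0$. The $\log M$ you invoke simply does not appear; your stretching rate is $O(a)$, independent of $M$, and the inflation time $t_*\sim a^{-1}\log(A/\eps)$ cannot be made small. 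To get any unbounded gradient one needs $\Psi$ to be non-smooth and essentially critical (of type $x_1x_2/|x|$ up to a log), in which case the divergence is only $(\log(1/|x|))^{1-\alpha}$ for some $\alpha>\tfrac12$, and after mollification at scale $M^{-1}$ the background depends on $M$ and the parameter hierarchy is different from what you wrote.

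Second, even a critical background does not maintain its singular gradient under its own evolution: the near-origin structure of $\Psi$ is flattened by the hyperbolic flow it generates, and the stretching rate at the location of the corrector decays like $t^{-1}$ (this is the ``regularization effect'' discussed in Section~\ref{subsec:Euler}). Thus the frozen-coefficient picture with constant rate $\lambda$ is invalid on the timescale needed for inflation.

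Third, your remainder control---that odd--odd symmetry plus the angular structure of $\Psi$ makes the effective rate on $r$ ``strictly smaller than $\lambda$''---is unsupported. The remainder is transported by the same velocity field and its self-generated velocity loses the same logarithm through the commutator; there is no mechanism singled out by the symmetry that separates the two rates.

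The paper avoids all three difficulties by two devices you do not use. It takes the smooth data to be a \emph{finite sum of dyadic bubbles} $\theta_0^{(N)}=\sum_{n=n_0}^{N}n^{-\alpha}\theta_{0,\mathrm{loc}}^{(n)}$ (the truncation of the nonexistence data), so that the ``background'' for the $n$-th bubble is the collection of all larger bubbles; this hierarchy is exactly what sustains a large stretching rate at small scales, because the shells that have not yet deformed keep contributing. More importantly, the paper does \emph{not} linearize or estimate any remainder directly. It runs a contradiction argument: assume $\|\theta^{(N)}\|_{L^\infty([0,T_N];H^2)}\le M_N$ with $M_N\sim\log N$; under this hypothesis the Key Lemma controls all velocity errors by $CM_N$, the bubbles remain well-ordered and almost invariant on explicit timescales (Claims I--III), and one computes that the squeezing forces $\|\theta^{(N)}(T_{\ell_N})\|_{H^2}\gg M_N$. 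This completely sidesteps the nonlinear error estimate that your scheme leaves open.
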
 

\noindent The above result implies that the solution operator defined from $H^2 \cap C^\infty$ to $H^2$ by $\tht_0 \mapsto \tht(t)$ for any $t>0$ cannot be continuous at the trivial solution. On the other hand, the following result shows that actually it is impossible to define the solution operator from $H^2$ to $L^\infty_tH^2$. 
\begin{maintheorem}[Nonexistence]\label{thm:nonexist}
	For any $\eps > 0$, there exists $\tht_{0} \in H^{2}\cap W^{1,\infty}(\bbT^2)$ satisfying \begin{equation*}
		\begin{split}
			\nrm{\tht_0}_{H^{2} \cap W^{1,\infty}} < \eps 
		\end{split}
	\end{equation*} such that there is \emph{no} solution to \eqref{eq:SQG} with initial data $\tht_{0}$ belonging to $L^\infty([0,\dlt];H^{2}(\bbT^{2}))$ with any $\dlt>0$. 
\end{maintheorem}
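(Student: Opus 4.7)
The plan is to exhibit $\tht_0$ as a superposition of pairwise localized smooth seeds produced by Theorem~\ref{thm:main}: each seed, on its own, would produce $H^2$ inflation on a time scale $\dlt_n^* \to 0$, so a putative $L^\infty_t H^2$ solution would have to inflate its norm arbitrarily large in arbitrarily short time, contradicting the uniform $H^2$ bound. For each $n \in \bbN$, apply Theorem~\ref{thm:main} with parameters $(\eps 2^{-n}, 2^{-n}, 2^n)$, obtaining smooth $\phi_n$ with $\nrm{\phi_n}_{H^2 \cap W^{1,\infty}} < \eps 2^{-n}$ whose smooth SQG solution $\tht^{(n)}$ lives on $[0, \dlt_n^*]$, $\dlt_n^* \le 2^{-n}$, and reaches $\nrm{\tht^{(n)}(t_n)}_{H^2} > 2^n$ at some $t_n$. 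A standard translation/cut-off postprocessing---which by inspection of the construction of Theorem~\ref{thm:main} does not disturb the smallness in $H^2 \cap W^{1,\infty}$---places each $\phi_n$ in a small disk $B_n = B(x_n, r_n)$ with the $B_n$ pairwise disjoint and $r_n \to 0$. Setting $\tht_0 := \sum_{n \ge 1} \phi_n$ gives $\tht_0 \in H^2 \cap W^{1,\infty}(\bbT^2)$ with norm $\lesssim \eps$.

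Suppose for contradiction that $\tht \in L^\infty([0,\dlt]; H^2)$ solves \eqref{eq:SQG} with $\tht(0) = \tht_0$, and let $M := \sup_{t \in [0,\dlt]} \nrm{\tht(t)}_{H^2}$. Fix $n$ with $\dlt_n^* < \dlt$. The difference $w_n := \tht - \tht^{(n)}$ solves
\begin{equation*}
	\rd_t w_n + u \cdot \nb w_n = -(u - u^{(n)}) \cdot \nb \tht^{(n)}, \qquad w_n(0) = \sum_{m \ne n} \phi_m,
\end{equation*}
where $u, u^{(n)}$ are the Biot--Savart velocities of $\tht, \tht^{(n)}$; crucially, $w_n(0)$ vanishes identically on $B_n$. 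Let $\chi_n$ be a smooth cut-off supported in a slight enlargement of $B_n$. The target estimate is
\[
\nrm{\chi_n w_n(t)}_{H^2} \le C_0 \qquad \text{for all } t \in [0, \dlt_n^*],
\]
with $C_0$ independent of $n$. Granting this, the triangle inequality gives $\nrm{\tht(t_n)}_{H^2} \ge \nrm{\chi_n \tht^{(n)}(t_n)}_{H^2} - C_0 \gtrsim 2^n - C_0$, which exceeds $M$ for large $n$ and contradicts $\nrm{\tht}_{L^\infty_t H^2} \le M$.

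The main obstacle is the localized $H^2$ bound on $w_n$. Because $H^2 \not\hookrightarrow W^{1,\infty}$ on $\bbT^2$, the velocity $u$ is merely $H^2$ and hence not Lipschitz, and the forcing $(u - u^{(n)}) \cdot \nb \tht^{(n)}$ carries $\nb \tht^{(n)}$, whose $H^1$ norm is precisely what is being inflated. Two features of the construction should save the estimate: (i) the kernel of $\nb^\perp(-\lap)^{-1/2}$ is smooth off the diagonal, so the velocity contribution on $B_n$ from $\sum_{m \ne n}\phi_m$ is $C^\infty$ with bounds independent of $n$; and (ii) $w_n(0)$ is itself $C^\infty$ on a neighborhood of $B_n$. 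These together should allow propagation of the smoothness of $\chi_n w_n$ on $[0, \dlt_n^*]$ via a Gronwall argument at slightly subcritical Sobolev regularity, with the commutators $[\chi_n, u \cdot \nb]$ controlled using the off-diagonal smoothness of the Riesz transform. Whether to execute this in $H^{2-\sigma}$ for small $\sigma > 0$ and upgrade, or directly at $H^2$ via a Littlewood--Paley decomposition, is the main technical choice; in either case the critical ingredient is a sharp commutator estimate tailored to the nonlocal structure of \eqref{eq:SQG}, and this is the only genuinely new analytic content beyond Theorem~\ref{thm:main}.
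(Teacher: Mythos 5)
Your proposal has a genuine gap at its central step: the uniform localized bound $\nrm{\chi_n w_n(t)}_{H^2}\le C_0$. This is a continuity/stability statement for the solution map at \emph{critical} regularity, which is exactly the kind of statement Theorems \ref{thm:main} and \ref{thm:nonexist} refute, and spatial separation of the seeds does not rescue it. Concretely: (i) even the unforced part of your difference equation, $\rd_t w_n+u\cdot\nb w_n=0$ with $u\in L^\infty_tH^2$, does not propagate $H^2$ regularity --- the velocity is only log-Lipschitz, the flow map is merely H\"older continuous (see \eqref{eq:flow-estimate}), and composition with a H\"older map destroys $H^2$; the Gronwall/Kato--Ponce estimate requires $\nb u\in L^1_tL^\infty$, which is unavailable, and no commutator with $\chi_n$ fixes this because the obstruction is local, not an off-diagonal effect of the Riesz transform. (ii) To estimate the forcing $(u-u^{(n)})\cdot\nb\tht^{(n)}$ in $H^2$ you must differentiate it twice, producing $\nb^3\tht^{(n)}$; Theorem \ref{thm:main} gives no quantitative control on $\nrm{\tht^{(n)}}_{H^3}$, which blows up at least as fast as the $H^2$ inflation, so ``smooth times small'' does not close. (iii) $w_n(t)$ does not remain supported away from $B_n$: it solves a forced transport equation, so the near-field part of $u-u^{(n)}$ on $B_n$ is not the smooth far-field contribution you invoke. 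Finally, your logical ordering (deduce Theorem \ref{thm:nonexist} from Theorem \ref{thm:main}) inverts the paper's: there, the norm inflation for smooth data is proved \emph{after} and by the same mechanism as nonexistence, so treating Theorem \ref{thm:main} as a black box and adding ``only a commutator estimate'' substantially understates what is needed.

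The paper's route avoids any comparison with auxiliary solutions. It assumes a hypothetical $L^\infty_tH^2$ solution, shows it is Lagrangian (log-Lipschitz velocity, unique flow, propagation of odd-odd symmetry and of vanishing near the axes), chooses data that is a single infinite sum of disjoint dyadic bubbles \eqref{eq:nonexist-data}, and then uses the Key Lemma (Lemma \ref{key_lem}), whose error terms are controlled purely by the hypothetical $H^2$ bound, to track the bubbles' positions inductively (\textbf{Claims I--III}). Each bubble is compressed toward the $x_1$-axis by the cumulative effect of the larger bubbles over its ``almost invariant'' timescale $T_n$, producing the lower bound \eqref{eq:growth}; summing $\nrm{\tht^{(n)}(T_\ell)}_{\dot H^2}^2$ over $n$ then contradicts finiteness of the $H^2$ norm. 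If you want to salvage your strategy, you would need to replace the stability estimate by this kind of direct Lagrangian analysis of the full hypothetical solution, at which point you are reproducing the paper's argument.
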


\begin{remark}
	We give a few remarks relevant to the statements above. \begin{itemize}
		\item With a rather straightforward modification of our proof, the space $H^{2}$ in Theorems \ref{thm:main} and \ref{thm:nonexist} can be replaced with $W^{1+\frac{2}{p},p}$ with \textit{any} $p>1$. Later, we shall sketch the proof in the endpoint case $p=\infty$. Moreover, the domain $\bbT^2$ can be replaced with $\bbR^2$ or bounded domains having symmetry axes.
		\item The initial data for which non-existence occur can be given explicitly; see \eqref{eq:nonexist-data} below. 
		\item The arguments we present can be adapted to prove ill-posedness for the case of \textit{modified} (and \textit{logarithmically regularized}) SQG equations; see Subsection \ref{subsec:mSQG} below. 
	\end{itemize}
\end{remark}

\subsection{Well-posedness theory for SQG}

To put the above ill-posedness results into context, let us briefly recall the well-posedness theory for the SQG equation. Depending on the regularity of the solutions considered, one has the following categories: \begin{itemize}
	\item \textbf{Strong solutions: local existence and uniqueness}. Using the Kato--Ponce commutator estimate, one obtains the following a priori estimate \begin{equation*}
		\begin{split}
			\frac{d}{dt} \nrm{\tht}_{H^s} \le C\nrm{\nb u }_{L^{\infty}} \nrm{\tht}_{H^s}
		\end{split}
	\end{equation*} for a solution of \eqref{eq:SQG}, which allows one to close $\nrm{\tht(t)}_{H^s} \lesssim \nrm{\tht_0}_{H^s}$ for $t \lesssim \nrm{\tht_0}_{H^s}^{-1}$ once $s>2$, using that $\nrm{\nb u}_{L^\infty} \lesssim \nrm{\tht}_{H^s}$. Similarly, $H^s$ can be replaced with $W^{s,p}$, as long as $s>1+\frac{2}{p}$. Based on this a priori estimate, one can prove local existence and uniqueness of a strong solution in the class $L^\infty_{t} W^{s,p}$ with $s>1+\frac{2}{p}$. 
	On the other hand, note that the borderline inequality $\nrm{\nb u}_{L^\infty} \lesssim \nrm{\tht}_{H^2}$ {fails}; this makes the Sobolev space $H^2$ (and similarly $W^{1+2/p,p}$) \textit{critical} for local well-posedness. This space is also \textit{scaling-critical}: the critical norm is left invariant under the transformation \begin{equation}\label{eq:scaling}
	\begin{split}
		\tht(t,x) \mapsto \lmb^{-1} \tht(t,\lmb x), \qquad u(t,x) \mapsto \lmb^{-1} u(t,\lmb x). 
	\end{split} 
\end{equation} While not much is known for long-time dynamics of \eqref{eq:SQG}, see a recent breakthrough of He--Kiselev \cite{HeKi} (and references therein) for a construction of smooth initial data with Sobolev norms growing at least exponentially for all times. Moreover, existence of traveling-wave solutions (\cite{Li-SQG,CQZZ-stab}) and rotating solutions (\cite{HH-sqg,HHH,CCG-duke}) are known.
	\item \textbf{Weak solutions: global existence}. Global existence of $L^p$--weak solutions is known, thanks to the works \cite{Res,Mar,BaeGra}. While such solutions are in general expected to be non-unique, see \cite{CCG-TRAN} for a uniqueness result for patches. On the other hand, for ``very'' weak solutions, non-uniqueness has been established--see \cite{BSV,CKL}. Note the gap of regularity between week and strong solutions. 
	\item \textbf{Ill-posedness in $W^{1,\infty}$}. To the best of knowledge, the only critical space ill-posedness result concerning (SQG) is the one given in \cite{EM1} for $W^{1,\infty},$ where a powerful general method for proving ill-posedness of active scalar systems in $L^\infty$--type spaces is developed. To be precise, the authors show that (\cite[Section 9.2]{EM1}) there exist smooth steady states $\bar{\tht}$ and a sequence of perturbations $\tilde{\tht}_0^{(\eps)}$ ($\eps\to 0^+$) so that the associated (SQG) solution $\tht^{(\eps)}$ with data $\bar{\tht} +\tilde{\tht}_0^{(\eps)}$ satisfies \begin{equation*}
		\begin{split}
			\nrm{\tht^{(\eps)}(0,\cdot) - \bar{\tht}}_{W^{1,\infty}} < \eps,\qquad \sup_{0<t<\eps}\nrm{ \tht^{(\eps)}(t,\cdot) - \bar{\tht} }_{W^{1,\infty}} > c
		\end{split}
	\end{equation*} where $c>0$ depends only on $\bar{\tht}$. It is very interesting to note that the authors use well-posedness in \textit{critical Besov spaces} with summability index 1. Such Besov well-posedness theory goes back to the pioneering work \cite{Vi1}. Our result (which applies in the $W^{1,\infty}$ case as well) basically says that one can take $\bar{\tht} \equiv 0 $ and replace $c$ by $\eps^{-1}$. On the other hand, one can restore well-posedness in $W^{1,\infty}$ by assuming some \textit{rotational symmetry} and anisotrophic H\"older regularity \cite{EJS}.
\end{itemize}
The current work settles the issue of \textit{strong ill-posedness} of (SQG) at critical Sobolev spaces, and we believe that this could be a first step in understanding the dynamics of ``slightly'' supercritical and subcritical solutions (e.g. evolution of $H^{s}$--data with $|s-2| \ll 1$), thereby bridging the gap between the theory of weak and strong solutions. Indeed, in a very recent work of Elgindi on singularity formation for the 3D Euler equations \cite{Elgindi-3D}, one of the key steps was to understand precisely the mechanism of $C^1$--illposedness. 
Closing this section, let us mention some interesting works which seem contradictory to our main results: \begin{itemize}
	\item Miura \cite{Miura} proved that the fractionally dissipative SQG system \begin{equation}  \label{eq:SQG-diss}  
		\left\{
		\begin{aligned} 
			\rd_t\tht + u\cdot \nb \tht + (-\lap)^{\bt}\tht =0 , \\
			u= \nb^\perp (-\lap)^{-\frac{1}{2}} \tht,
		\end{aligned}
		\right.
	\end{equation} is actually \textit{well-posed} in the critical Sobolev space $H^{2-2\bt}$ for all $\bt>0$ (for data of any size), and this seems to suggest $H^2$ well-posedness of the inviscid system by taking $\bt\to0$! See \cite{Li,JKM1,JKM2} for related recent advances. 
	\item An invariant measure defined on $H^2(\bbT^2)$ which guarantees global well-posedness in $L^\infty_tH^2$ for any initial data in the support of the measure was constructed in \cite{FSy}. The data in Theorem \ref{thm:nonexist} certainly does not belong to the support of such a measure. 
\end{itemize} 

\subsection{Generalized SQG equations}\label{subsec:mSQG}

In the recent years, there has been significant interest in the study of so-called \textit{generalized} SQG equations, given by \begin{equation}  \label{eq:SQG-gen} 
	\left\{
	\begin{aligned} 
		\rd_t\tht + u\cdot \nb \tht = 0, \\
		u= \nb^\perp P(\Lmb) \tht,
	\end{aligned}
	\right.
\end{equation} where $P(\Lmb)$ is some Fourier multiplier, with $\Lmb = (-\lap)^{\frac12}$. Two distinguished cases are $P(\Lmb) = \Lmb^{-1}$ (SQG) and $P(\Lmb) = \Lmb^{-2}$ (2D incompressible Euler). Of particular interest is the case of $\alp$--SQG systems given by $P(\Lmb) = \Lmb^{-\alp}$ with $1 \le \alp \le2$, which interpolates the SQG and 2D Euler equations. The $L^2$--based critical Sobolev space is then given by $H^{3-\alp}$, and let us point out that the methods developed in the current work can handle the entire range $1 \le \alp \le 2$ without any essential change in the proof. One could consider more general symbols such as $P(\Lmb) = \Lmb^{-\alp} \log^{-\gmm}(10+\Lmb)$ with $\gmm>0$, which give rise to the so-called \textit{logarithmically regularized} systems (\cite{ChaeWu,CCW3,DongLi}). It is known that if the power of the logarithm is sufficiently large, then one can restore well-posedness in $H^{3-\alp}$ (\cite{ChaeWu}), but at this point it is more appropriate to regard a  logarithmically singularized Sobolev space to be critical. Indeed, one can see from our proof that there is a ``logarithmic'' room\footnote{strictly speaking, some power of the logarithm} in the arguments and therefore the same proof can cover same ill-posedness results in the slightly logarithmically regularized systems. We shall not dwell on this issue any further. 

\subsection{Critical space illposedness for Euler}\label{subsec:2D-Euler}

It should be emphasized that the strong Sobolev ill-posedness statements, Theorems \ref{thm:main} and \ref{thm:nonexist}, were first established in a groundbreaking work of Bourgain and Li \cite{BL1,BL3D}, for the case of 2D and 3D Euler equations, respectively. Further developments, including the current work, seem to have been strongly inspired by these papers. Recently, Kwon \cite{Kwon} settled the problem of strong ill-posedness in $H^1$ for logarithmically regularized (strictly speaking, powers of the log less than $1/2$) 2D Euler equations, nicely complementing previous $H^1$ well-posedness from \cite{ChaeWu}. On the other hand, much simpler proofs of $H^1$ ill-posedness for 2D Euler, which also shows continuous-in-time degeneration of the solution in Sobolev spaces, have appeared in \cite{EJ,JY2}. Some details of these simplified arguments will be given in the next section. 

\section{Ingredients of the proof}\label{sec:ingredients} 

The purpose of this section is to sketch the main ingredients of the proof. Several key ideas have already appeared in earlier works establishing ill-posedness in the Euler case; we briefly review those in Subsection \ref{subsec:Euler}. Additional difficulties arising in the (generalized) SQG case and new ideas are covered then in Section \ref{subsec:SQG}. 

\subsection{Strategy in the Euler case}\label{subsec:Euler}

In this section, let us give an overview of the ill-posedness proof in the 2D Euler case. We recall that in $\bbT^2$, the Euler equations are given by \begin{equation}  \label{eq:Euler} \tag{Euler}
	\left\{
	\begin{aligned} 
		\rd_t\omg + u\cdot \nb \omg = 0, \\
		u= \nb^\perp (-\lap)^{-1} \omg.
	\end{aligned}
	\right.
\end{equation} In terms of $\omg$, the critical $L^2$--based Sobolev space is $H^1(\bbT^2)$; indeed, $\omg\in H^1$ barely fails to guarantee $\nabla u \in L^\infty$, which is necessary to close the a priori estimate in $H^1$. 

\medskip

\noindent \textit{Choice of data for Euler}. As a starting point of discussion, we present an interesting identity observed by T. Elgindi: \begin{equation}\label{eq:Euler-iden}
\begin{split}
	\frac{d}{dt} ( \nrm{ \rd_2 \omg }_{L^2}^2 - \nrm{\rd_1\omg}_{L^2}^2 ) = \frac12\int_{\bbT^2} \rd_1u_1 ((\rd_2\omg)^2+(\rd_1\omg)^2) + \omg\rd_1\omg\rd_2\omg \, \ud x. 
\end{split}
\end{equation} For $\omg_0\in L^\infty$, Yudovich theory provides a unique global solution in $L^\infty([0,\infty)\times L^\infty$, and note that the last term in \eqref{eq:Euler-iden} cannot contribute to a large growth of the $H^1$--norm in a small time interval. Therefore, to prove existence of an $H^1\cap L^\infty$--initial data $\omg_0$ which ``escapes'' $H^1$ instantaneously, the goal would be to find $\omg_0 \in H^1\cap L^\infty$ such that \begin{equation}\label{eq:target}
\begin{split}
	\int_0^t \int_{\bbT^2} \rd_1u_1 (\rd_2\omg)^2 \, \ud x  = +\infty 
\end{split}
\end{equation} for \textit{any} $t>0$, where $\omg$ is the Yudovich solution with data $\omg_0$ and $u = \nb^\perp (-\lap)^{-1} \omg.$ In particular, it is necessary that at the initial time, we have \begin{equation}\label{eq:key-initial}
\begin{split}
	\int_{\bbT^2} \rd_1u_{0,1} (\rd_2\omg_0)^2 \, \ud x = +\infty. 
\end{split}
\end{equation} The choice in \cite{EJ} was \begin{equation}\label{eq:vol-ini}
\begin{split}
	\omg_0(x) \simeq \frac{x_1x_2}{|x|^2} |\ln|x||^{-\alp}, \qquad |x| \ll 1
\end{split}
\end{equation} since then (\cite{BC,Den2}) \begin{equation}\label{eq:vel-ini}
\begin{split}
	\rd_1u_{0,1}(x) \simeq |\ln|x||^{1-\alp}, \qquad |x| \ll 1
\end{split}
\end{equation} which in particular guarantees \eqref{eq:key-initial} for a range of $\alp>0$. 

\medskip

\noindent \textit{Hyperbolic flow}. Geometrically, vorticity which is positive on the first quadrant and odd with respect to both axes (as in \eqref{eq:vol-ini}) induces velocity which is stretching in the $x_1$-direction and contracting in the other, which leads to squeezing of the vorticity near the $x_1$-axis and growth of $H^1$--norm. This so-called ``hyperbolic flow scenario'' has been used to produce Euler solutions with gradient growth--see \cite{KS,Z,X,Den,Den2,Denisov-merging,EJSVP2}. Flattening of the vorticity level sets in such a flow configuration was studied in detail in \cite{Zlevel,Jeong}. 

\medskip

\noindent \textit{Regularization effect}. The main task is then to ensure that the velocity field, for a small time interval, retains its logarithmic divergence near the origin: indeed, instantaneous blow-up of the $H^1$--norm is not too difficult to see for the passive transport equation \begin{equation*}
	\begin{split}
		\rd_t \omg +u_0\cdot\nb\omg=0,
	\end{split}
\end{equation*} by solving the equation along the flow generated by $u_0$. When one tries to replace $u_0$ by $u$, a fundamental difficulty arises: anisotropic stretching of the vorticity regularizes the velocity. Indeed, rather involved computations in \cite{EJSVP1} suggests the asymptotics $\nrm{\nb u (t)}_{L^\infty}\lesssim t^{-1}$, which is barely non-integrable in time; this indicates that it could be a very delicate problem to verify \eqref{eq:target}. This upper bound of $t^{-1}$ can be seen for instance by solving the passive problem above and re-calculating the associated velocity at later times. 

\medskip

\noindent \textit{Key Lemma and Lagrangian approach}. Towards the goal of obtaining a \textit{lower bound} on the velocity gradient $|\nb u(t)|\gtrsim t^{-1}$, one needs to have a robust way of estimating the velocity gradient and prove some ``stability'' of the initial data. Regarding the former, the celebrated \textbf{Key Lemma} of Kiselev--Sverak asserts that (stated roughly) \begin{equation}\label{eq:keyLemma-KS}
	\begin{split}
		\frac{u_1(x)}{x_1} \simeq \int_{[x_1,1]\times[x_2,1]} \frac{y_1y_2}{|y|^4} \omg(y) \, \ud y, 
	\end{split}
\end{equation} for $\omg\in L^\infty$ with odd-odd symmetry. Note that $u_1=0$ for $x_1=0$ by symmetry, so that the left hand side is an approximation of $\rd_1 u_1$. The lower bound of the form \eqref{eq:keyLemma-KS} has proven to be extremely powerful in establishing growth of the vorticity (\cite{KS,Z,X,KRYZ,GaPa,Elgindi-3D,HeKi}). It is interesting to note that Bourgain--Li independently derived similar lower bounds in \cite{BL1}. Next, regarding the issue of showing stability of the data, the key observation is the hierarchy of vortex dynamics expressed in \eqref{eq:keyLemma-KS}: the vorticity around a point $x$ is being affected mainly by the vorticity supported in $|y|\ge 2|x|$. This suggests that the chunk of vorticity supported far away from the origin is more stable, thereby contributing to the right hand side of \eqref{eq:keyLemma-KS} for a longer time interval, to squeeze the vorticity closer to the origin. The proof of such stability and squeezing phenomena should be done in the Lagrangian variable, using the transport formulas \begin{equation*}
\begin{split}
	\omg(t,x) = \omg_0(\Phi_t^{-1}x), \qquad \nb\omg(t,x) = \nb\omg_0(\Phi_t^{-1}(x)) \nb\Phi_t^{-1}(x) 
\end{split}
\end{equation*} where $\Phi_t$ is the flow map at time $t$. In the actual ill-posedness proofs, Lagrangian versions of the formula \eqref{eq:Euler-iden} are used. 

\subsection{Difficulties in the SQG case}\label{subsec:SQG}

Overall, the strategy of the ill-posedness proof in the SQG case is similar to that explained in the above for 2D Euler. Roughly speaking, the initial data is now modified to be \begin{equation*}
	\begin{split}
		\tht_0 \simeq \frac{x_1x_2}{|x|} |\ln|x||^{-\alp}, \qquad |x| \ll 1, 
	\end{split}
\end{equation*} which is odd-odd and nonnegative in the first quadrant. The associated SQG velocity then satisfies the asymptotics \eqref{eq:vel-ini} with strong hyperbolicity near the origin, which should stretch $\tht$ near the $x_1$-axis. The issue is whether such a stretching effect is sufficiently strong to remove $\tht$ from the critical Sobolev space it started from. Let us now explain some main differences with the Euler case and new ideas employed to handle those. 

While the equation for $\tht$ in (SQG) is simply the transport equation exactly as in the 2D Euler case, probably the most significant difference is that while the $L^\infty$--norm is the common strongest conservation law, it is critical for 2D Euler but one order weaker for SQG. Furthermore, there is global well-posedness for 2D Euler with $\omg_0\in L^\infty$ (Yudovich \cite{Y1}), and the associated sharp estimates given by Yudovich theory have been very useful in understanding the dynamics.\footnote{Even in the 3D Euler case, Bourgain--Li \cite{BL3D} actually carefully identifies a class of initial data for which $\omg\in L^\infty$ propagates locally in time. Then, one can prove and utilize estimates similar to Yudovich's in 3D.} On the other hand, the corresponding quantity in the SQG case, $\nrm{\nb\tht}_{L^\infty}$, blows up together with $\nrm{\tht}_{H^2}$. 

It seems that the only way to handle this issue is to rely entirely on a contradiction argument--we \textit{assume} that there is an $L^\infty([0,T];H^2)$--solution, and then prove that for any $t>0$, the $H^2$--norm of the solution must be actually infinite. The whole point in this contradiction argument is that we can use the hypothetical $H^2$--bound to control the solution, an idea originated in \cite{BL1}. Again, the difficulty in the SQG case is that this hypothetical $H^2$ control is the only useful bound, whereas in the Euler case one has both $H^1$ and $L^\infty$ control. Fortunately, it turns out that having an $H^2$--bound guarantees that the velocity is log-Lipschitz, which implies in particular \textit{uniqueness} in the class $L^\infty_tH^2$ (this guarantees propagation in time of odd-odd symmetry and non-negativity) and existence of the flow map. That is, an $L^\infty_tH^2$--solution is Lagrangian, and therefore we can apply transport formulas to understand the dynamics. 

Under the contradiction hypothesis, the main part of the argument is to derive and apply a version of the Key Lemma adapted to the SQG case. Series technical difficulties appear; to begin with, in the remainder estimate of the Key Lemma (see estimates \eqref{u1} and \eqref{u2} in Lemma \ref{key_lem}) we are only allowed to use $\tht\in H^2$. As a consequence, the remainder term blows up super-logarithmically (the power 3/2 in \eqref{u2}) as the point $x$ approaches the axes, whereas only logarithmic errors are allowed in the nonexistence proof. It seems that the only way to overcome this issue is to track carefully the geometry of the support of $\tht$ in time so that the problematic remainder term disappears. To achieve this, we replace $\tht_0$ with a disjoint union of dyadic ``bubbles'' satisfying the same asymptotics as $|x|\to0$ (see \eqref{eq:nonexist-data} below) and obtain detailed information on the location of these bubbles for an interval of time inductively, starting from the largest one. Such refined information appear in technical \textbf{Claims I, II, III} in the proof. In the context of controlling bubbles, another significant difference with Euler is that the ``self-interaction'' of a bubble is \textit{not} a bounded term anymore. To overcome this issue we need to track the location of the ``top point'' of each bubble, which is the slowest point but does not suffer from self-interactions. 

Closing this section, we remark that the versions of the Key Lemma derived in this work should be useful in improving previous growth results for the active scalar equations, as we handle the remainder term only with the critical quantity.

\subsection{Organization of the paper} 

The rest of this paper is organized as follows.  The main technical tool, which we shall refer to as Key Lemma, is stated and proved in Section \ref{sec:key}. After that, the proofs of Theorems \ref{thm:nonexist} and \ref{thm:main} are given in Sections \ref{sec:nonexist} and \ref{sec:norm-inflation}.  
 
\section{The Key Lemma}\label{sec:key}

To begin with, we recall the famous Hardy's inequality. 
\begin{lemma}[Hardy's inequality]\label{lem_hd}
	Let $f$ be a smooth function defined on the interval $(0,l)$ with $f(0) = 0$. Then we have
	\begin{equation*}
		\begin{split}
			\nrm{x^{-1}f(x)}_{L^2(0,l)} \leq 2 \nrm{\partial f(x)}_{L^2(0,l)}
		\end{split}
	\end{equation*}
	and
	\begin{equation*}
		\begin{split}
			\nrm{x^{-2}f(x)}_{L^2(0,l)}^2 \leq 2 \nrm{\partial^2 f(x)}_{L^2(0,l)}^2
		\end{split}
	\end{equation*}
	for all $l \in [0,1]$.
\end{lemma}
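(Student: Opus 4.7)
The plan is to obtain both estimates by a single integration by parts, using $f(0)=0$ to eliminate boundary contributions.

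For the first inequality, I would write $x^{-2} = -(x^{-1})'$ and integrate by parts:
\[
\int_0^l \frac{f(x)^2}{x^2}\,\ud x = -\left[\frac{f(x)^2}{x}\right]_0^l + 2\int_0^l \frac{f(x)\,\partial f(x)}{x}\,\ud x.
\]
At $x=l$ the boundary term is nonpositive and can be dropped; at $x=0$, smoothness together with $f(0)=0$ gives $f(x)^2/x = O(x)\to 0$. Applying Cauchy--Schwarz on the right then yields $\|x^{-1}f\|_{L^2}^2 \le 2\|x^{-1}f\|_{L^2}\|\partial f\|_{L^2}$, and dividing through (the case $x^{-1}f \equiv 0$ being trivial) gives the factor of $2$.

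For the second inequality I would iterate. A parallel integration by parts using $x^{-4} = \tfrac{1}{3}(-x^{-3})'$ produces
\[
\int_0^l \frac{f(x)^2}{x^4}\,\ud x \le \frac{2}{3}\int_0^l \frac{|f(x)|\,|\partial f(x)|}{x^3}\,\ud x \le \frac{2}{3}\,\|x^{-2}f\|_{L^2}\,\|x^{-1}\partial f\|_{L^2}.
\]
Then applying the first inequality to $g=\partial f$ (which vanishes at $0$) bounds $\|x^{-1}\partial f\|_{L^2} \le 2\|\partial^2 f\|_{L^2}$, giving $\|x^{-2}f\|_{L^2} \le \tfrac{4}{3}\|\partial^2 f\|_{L^2}$; squaring gives the constant $16/9 < 2$ claimed.

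The only step requiring care is the vanishing of the boundary term at $x=0$ in the second integration by parts: for $\int_0^l x^{-4}f(x)^2\,\ud x$ to even be finite one needs $\partial f(0)=0$ in addition to $f(0)=0$, so that $f(x)=O(x^2)$ and the boundary contribution $x^{-3}f(x)^2 = O(x)$ decays. This is implicit in the hypothesis on $g=\partial f$ when iterating, and I would state it explicitly in the formal write-up; once this is granted, both steps are completely routine.
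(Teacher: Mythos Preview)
Your proposal is correct and follows essentially the same route as the paper: integration by parts with Cauchy--Schwarz for the first inequality, then the identity $x^{-4}=-\tfrac{1}{3}(x^{-3})'$ followed by an application of the first inequality to $\partial f$ for the second. Your remark that $\partial f(0)=0$ is implicitly required for the second estimate is correct and applies equally to the paper's own argument; in the paper the lemma is only invoked for functions vanishing in a neighborhood of the axis, so this hypothesis is available.
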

\begin{proof}
	By the fundamental theorem of calculus and the assumption for $f$, we see for a.e. $x_2 \in [0,z_2]$ that
	\begin{align*}
		\int_0^l \frac {f(x)^2}{x^2}\, \mathrm{d}x &= - \frac 1l f(l)^2 + 2\int_0^l \frac {f(x)}{x} \partial f(x) \,\mathrm{d}x \\
		&\leq 2 \int_0^l \frac {f(x)}{x} \partial f(x) \mathrm{d}x.
	\end{align*}
	Using H\"{o}lder's inequality and Young's inequality, we have
	$$
	\int_0^l \frac {f(x)^2}{x^2}\, \mathrm{d}x \leq 4 \int_0^l |\partial f(x)|^2 \,\mathrm{d}x.
	$$
	Similarly, we have for a.e. $x_2 \in [0,z_2]$ that
	\begin{align*}
		\int_0^l \frac {f(x)^2}{x^4}\, \mathrm{d}x &= - \frac 1{3l^3} f(l)^2 + \frac 23 \int_0^l \frac {f(x)}{x^3} \partial f(x) \,\mathrm{d}x \leq \frac 23 \int_0^l \frac {f(x)}{x^2} \frac {\partial f(x)}{x} \mathrm{d}x \leq \frac 12 \int_0^l \frac {\partial f(x)^2}{x^2}\, \mathrm{d}x.
	\end{align*}
	Applying the above estimate, we obtain
	$$
	\int_0^l \frac {f(x)^2}{x^4}\, \mathrm{d}x \leq 2 \int_0^l |\partial^2 f(x)|^2 \,\mathrm{d}x.
	$$
	This completes the proof.
\end{proof}

We shall now state and prove the Key Lemma. For convenience, we shall normalize the SQG Biot--Savart law in a way that  
\begin{equation*}
	u(t,x) =  \sum_{n \in \mathbb{Z}^2} \int_{[-1,1)^2} \frac {(x -(y + 2n))^{\perp}}{|x - (y+2n)|^3} \theta(t,y) \,\mathrm{d}y
\end{equation*}
holds.
\begin{lemma}\label{key_lem}
	We impose the following assumptions on $\tht \in H^{2}$: \begin{itemize}
		\item $\tht$ is odd with respect to both axes, i.e., $\theta(x) = - \theta(\bar{x}) = \theta(-x) = -\theta(\tilde{x})$ where $\bar{x} := (x_1,-x_2)$ and $\tilde{x} := (-x_1,x_2)$;
		\item $\tht$ vanishes near the axis; to be precise, for any $x \ne (0,0)$ satisfying either $x_1 = 0$ or $x_2=0$, there exists an open neighborhood of $x$ such that $\tht$ vanishes. 
	\end{itemize} Then, for any $x$ satisfying $|x| < 1/4$ and $x_1 > x_2 > 0$, we have 
	\begin{equation}\label{u1}
		\left|\frac {u_1(x)}{x_1} - 12\int_{Q(x)} \frac { y_1 y_2 }{|y|^5} \theta(y) \,\mathrm{d}y\right| \le B_1(x)
	\end{equation}
	and
	\begin{equation}\label{u2}
		\left|\frac {u_2(x)}{x_2} + 12\int_{Q(x)} \frac { y_1 y_2 }{|y|^5} \theta(y) \,\mathrm{d}y  \right| \le \left( 1+\log \frac{x_1}{x_2} \right) B_2(x) + \left( 1+\log \frac{x_1}{x_2} \right)^{\frac 32} B_3(x),
	\end{equation}
	where $Q(x) := [2x_1,1] \times [0,1]$ and $B_1, B_2, B_3$ satisfy
	\begin{equation*}
		|B_1(x)| + |B_2(x)| \leq  C(\| \nabla^2 \theta \|_{L^2([0,1]^2)} + \| \theta \|_{L^\infty([0,1]^2)})
	\end{equation*}
	and
	\begin{equation*}|B_3(x)| \leq  C (\| \nabla \theta \|_{L^2(R(x))} + \| y_2^{-1} \partial_1 \theta(y) \|_{L^2(R(x))}), \qquad R(x) := [x_1/2,2x_1]\times [2x_2,1].\end{equation*}
\end{lemma}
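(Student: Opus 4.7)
The plan is to exploit the odd--odd symmetry of $\theta$ and its vanishing near the axes to reduce the Biot--Savart integral for $u$ to one over the first quadrant $[0,1]^2$, then isolate the main term via Taylor expansion. The periodic tail $\sum_{n \neq 0}$ in the formula for $u$ satisfies $|x - y - 2n| \gtrsim 1$ for $x, y \in [-1,1]^2$ and $n \ne 0$, so it contributes a smooth function of size $\lesssim \|\theta\|_{L^\infty}$ that is absorbed into $B_j$. The $n = 0$ term, combined with $\theta(\pm y_1, \pm y_2) = \pm \theta(y_1, y_2)$ and the accompanying sign factors, becomes $\int_{[0,1]^2} \widetilde{K}_j(x, y) \theta(y) \, dy$ with a symmetrized kernel $\widetilde{K}_j$; direct inspection shows $\widetilde{K}_1$ is odd in $x_1$ and $\widetilde{K}_2$ is odd in $x_2$, so in particular $\widetilde{K}_1(0, x_2; y) = 0$ and $\widetilde{K}_2(x_1, 0; y) = 0$. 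We then split $[0,1]^2 = P(x) \cup Q(x)$ with near-axis strip $P(x) := [0, 2x_1] \times [0, 1]$ and $Q(x)$ as in the statement.

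\textbf{The $u_1$ estimate.} On $Q(x)$, since $y_1 \geq 2 x_1 \gtrsim |x|$, $\widetilde{K}_1$ is smooth in $x$ and its oddness in $x_1$ gives the Taylor expansion
$$
\widetilde{K}_1(x, y) = x_1 \cdot \partial_{x_1} \widetilde{K}_1(0, x_2; y) + O(x_1^3 / |y|^5).
$$
A direct computation of the four reflected terms identifies the leading coefficient at $x_2 = 0$ as $\partial_{x_1} \widetilde{K}_1(0, 0; y) = 12 y_1 y_2/|y|^5$, which is the desired main term. The resulting error $\widetilde{K}_1/x_1 - 12 y_1 y_2/|y|^5$ then decomposes into the cubic $x_1$-remainder and the even-in-$x_2$ quadratic piece $\partial_{x_1} \widetilde{K}_1(0, x_2; y) - \partial_{x_1} \widetilde{K}_1(0, 0; y)$; both will be estimated by combining $\|\theta\|_{L^\infty}$ on the bulk with Hardy's inequality (Lemma \ref{lem_hd}) on the thin subregion $\{y_2 \lesssim x_2\}$, where $\theta|_{y_2 = 0} = 0$ provides the needed gain. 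On the near strip $P(x)$, the kernel $\widetilde{K}_1/x_1$ is singular but $\theta|_{y_1 = 0} = 0$; Cauchy--Schwarz paired with the second Hardy estimate $\|\theta/y_1^2\|_{L^2} \lesssim \|\partial_1^2 \theta\|_{L^2}$ then delivers the $\|\nabla^2 \theta\|_{L^2}$ part of $B_1$.

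\textbf{The $u_2$ estimate.} The same strategy applies to $u_2$ with $x_1, x_2$ swapped in the Taylor step, but the error is more delicate. Direct computation gives
$$
\partial_{x_2} \widetilde{K}_2(x_1, 0; y) = 6 y_2 \left[ \frac{x_1 - y_1}{((x_1-y_1)^2 + y_2^2)^{5/2}} - \frac{x_1 + y_1}{((x_1+y_1)^2 + y_2^2)^{5/2}} \right],
$$
which converges to $-12 y_1 y_2/|y|^5$ as $x_1 \to 0$. At finite $x_1$, however, it becomes near-singular on the anisotropic strip $R(x) = [x_1/2, 2x_1] \times [2x_2, 1]$, where $y_1 \approx x_1$ makes the difference-of-reflections cancellation weakest. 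On $R(x)$, the plan is to integrate by parts in $y_1$ (using $\theta|_{y_1 = 0} = 0$) to absorb one derivative onto $\theta$, and then apply Cauchy--Schwarz against $\|\nabla \theta\|_{L^2(R(x))}$ and $\|y_2^{-1}\partial_1 \theta\|_{L^2(R(x))}$. This incurs two logarithmic factors: one $(1 + \log(x_1/x_2))^{1/2}$ from $\int_{2x_2}^1 dy_2/y_2$, and another from the $y_1$-integration over the strip of aspect ratio $\log(x_1/x_2)$. Together they yield the $(1 + \log(x_1/x_2))^{3/2} B_3$ weight, while the simpler $(1 + \log(x_1/x_2)) B_2$ weight collects the $L^\infty$- and $H^2$-controlled contributions on the complement of $R(x)$.

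\textbf{Main obstacle.} The principal technical difficulty is the super-logarithmic $(1 + \log(x_1/x_2))^{3/2}$ factor appearing in the $u_2$ bound. In the analogous 2D Euler Kiselev--Sverak Key Lemma, $\omega \in L^\infty$ is enough to close the argument with a uniformly bounded remainder; here the only critical quantity available is $\|\theta\|_{H^2}$ (recall $\|\nabla \theta\|_{L^\infty}$ blows up with it), so the super-logarithmic weight on $B_3$ will be unavoidable. This weight is not integrable against arbitrary support configurations for $\theta$, and it is precisely what will force the later bubble-tracking strategy of the ill-posedness proof, arranging $x_2 \sim x_1$ on each bubble's support so that the logarithmic penalty stays harmless.
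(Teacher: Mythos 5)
Your proposal is correct and follows essentially the same route as the paper's proof: symmetrize over the four reflections, split off the periodic tail, extract the linear-in-$x_1$ (resp.\ $x_2$) main term on $Q(x)$ — the paper uses the exact algebraic identity $A^{-3}-B^{-3}=(B^2-A^2)(A^2+AB+B^2)/(A^3B^3(A+B))$ where you Taylor-expand, which is equivalent here since $|y|\gtrsim |x|$ on $Q(x)$ — then use Hardy's inequality together with the vanishing of $\theta$ near the axes on the near-axis strip, and finally integrate by parts in $y_1$ on $[x_1/2,2x_1]\times[0,2x_1]$ with a split at $y_2=2x_2$ to produce the $(1+\log(x_1/x_2))$ and $(1+\log(x_1/x_2))^{3/2}$ weights, exactly as you describe. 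The one point to state more carefully is the $n\neq 0$ tail: for \eqref{u1} it must contribute $O(x_1\|\theta\|_{L^\infty})$ to $u_1$ (not merely $O(\|\theta\|_{L^\infty})$, since one divides by $x_1$), which follows from the oddness in $x_1$ of the \emph{full} periodic symmetrized kernel — the paper implements this by pairing $I_1(n)$ with $I_1(\tilde{n})$, which simultaneously yields the summable decay $O(|n|^{-4})$.
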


\begin{remark}
	We clearly have that
	\begin{equation*}
		\| y_2^{-1} \partial_1 \theta(y) \|_{L^2(R(x))} \leq 2\| \nabla^2 \theta \|_{L^2([0,1]^2)}.
	\end{equation*}
\end{remark}

\begin{proof}
	We fix a point $x=(x_1,x_2)$ satisfying the assumptions of the lemma. After a symmetrization, we have 
	\begin{multline}\label{u_est}
		u(x) =  \sum_{n \in \mathbb{Z}^2} \int_{[0,1]^2} \bigg( \frac {(x -(y + 2n))^{\perp}}{|x - (y+2n)|^3} - \frac {(x -(\tilde{y} + 2n))^{\perp}}{|x - (\tilde{y}+2n)|^3} \\
		+ \frac {(x -(-y + 2n))^{\perp}}{|x - (-y+2n)|^3} - \frac {(x -(\bar{y} + 2n))^{\perp}}{|x - (\bar{y}+2n)|^3} \bigg) \theta(y) \,\mathrm{d}y .
	\end{multline}
	We estimate $u_1$ first. Consider
	\begin{equation*}
		\begin{aligned}
			I_1(n) &:= -\int_{[0,1]^2} \left( \frac {x_2 -(y_2 + 2n_2)}{|x - (y+2n)|^3} - \frac {x_2 -(y_2 + 2n_2)}{|x - (\tilde{y}+2n)|^3} \right) \theta(y) \,\mathrm{d}y, \\
			I_2(n) &:= -\int_{[0,1]^2} \left( \frac {x_2 -(-y_2 + 2n_2)}{|x - (-y+2n)|^3} - \frac {x_2 -(-y_2 + 2n_2)}{|x - (\bar{y}+2n)|^3} \right) \theta(y) \,\mathrm{d}y 
		\end{aligned}
	\end{equation*} so that from \eqref{u_est}
	\begin{equation*}
		u_1(x) =  \sum_{n \in \mathbb{Z}^2} (I_1(n) + I_2(n)).
	\end{equation*}
	We think of the cases $n = 0$ and $n \neq 0$ separately. For $n \neq 0$, we see that
	\begin{equation*}
		|I_1(n) + I_1(\tilde{n})| \leq O(|n|^{-4}) \| \theta \|_{L^\infty([0,1]^2)} x_1
	\end{equation*}
	and
	\begin{equation*}
		|I_2(n) + I_2(\bar{n})| \leq O(|n|^{-4}) \| \theta \|_{L^\infty([0,1]^2)} x_1.
	\end{equation*}
	Therefore, 
	\begin{equation}\label{far_est}
		\Bigg| \sum_{n \in \mathbb{Z}^2 \setminus \{0\}}(I_1(n) + I_2(n)) \Bigg| \leq C x_1 \| \theta \|_{L^\infty([0,1]^2)}.
	\end{equation}
	We now estimate the case of $n = 0$. Using
	\begin{equation}\label{AB_est}
		\frac 1{A^3} - \frac 1{B^3} = \frac {(B^2 - A^2)(A^2 + AB + B^2)}{A^3B^3 (A+B)},
	\end{equation}
	we have
	\begin{equation*}
		I_1(0) = -4x_1 \int_{[0,1]^2} \frac { y_1(x_2 - y_2) (|x - y|^2 + |x - y| |x - \tilde{y}| + |x - \tilde{y}|^2)}{|x - y|^3 |x - \tilde{y}|^3 (|x - y| + |x - \tilde{y}|)} \theta(y) \,\mathrm{d}y.
	\end{equation*}
	Noting that $[0,1]^2 = Q(x) \cup [0,2x_1] \times [2x_1,1] \cup [0,2x_1] \times [0,2x_1]$, we estimate the integral for each set. 
	
	\medskip
	
	\noindent \textit{(i) $Q(x)$} 
	
	\medskip
	
	\noindent In the case of $y \in Q(x)$, we can show that \begin{equation}\label{eq:equiv-y}
		\begin{split}
			\frac14 |y| \le |x-y| \le |y|, \qquad \frac 12|y| \le |x-\tilde{y}| \le 2|y|
		\end{split}
	\end{equation} because the first inequality comes from
	\begin{equation*}
		|x-y|^2 \geq |x_1-y_1|^2 \geq \frac 14 y_1^2 \geq \frac 18 |y|^2, \qquad y_1 \geq y_2
	\end{equation*} and
	\begin{equation*}
		|x-y|^2 = |x_1-y_1|^2 + |x_2 - y_2|^2 \geq \frac 14 y_1^2 + \frac 14 y_2^2, \qquad y_1 \leq y_2.
	\end{equation*} 
	The goal is to prove that \begin{equation*}
		\begin{split}
			-\int_{Q(x)} \frac { y_1(x_2 - y_2) (|x - y|^2 + |x - y| |x - \tilde{y}| + |x - \tilde{y}|^2)}{|x - y|^3 |x - \tilde{y}|^3 (|x - y| + |x - \tilde{y}|)} \theta(y) \,\mathrm{d}y  =:J
		\end{split}
	\end{equation*} satisfies \begin{equation}\label{eq:J-goal}
		\begin{split}
			\left| J - \frac{3}{2} \int_{Q(x)} \frac{y_1y_2}{|y|^5}\theta(y) \,\mathrm{d}y \right| \le C \Vert \nabla^2 \theta \Vert_{L^2}.
		\end{split}
	\end{equation} We separate 
	\begin{equation*}
		J = J_1 + J_2
	\end{equation*} where \begin{equation*}
		\begin{split}
			J_1 := \int_{Q(x)} \frac { y_1y_2  (|x - y|^2 + |x - y| |x - \tilde{y}| + |x - \tilde{y}|^2)}{|x - y|^3 |x - \tilde{y}|^3 (|x - y| + |x - \tilde{y}|)} \theta(y) \mathrm{d}y \\
		\end{split}
	\end{equation*} and \begin{equation*}
		\begin{split}
			J_2 := -\int_{Q(x)} \frac { y_1x_2  (|x - y|^2 + |x - y| |x - \tilde{y}| + |x - \tilde{y}|^2)}{|x - y|^3 |x - \tilde{y}|^3 (|x - y| + |x - \tilde{y}|)} \theta(y) \mathrm{d}y .
		\end{split}
	\end{equation*} Using \eqref{eq:equiv-y}, we may estimate \begin{equation*}
		\begin{split}
			|J_2| \le C|x| \int_{Q(x)} \frac 1{|y|^2} \frac{|\theta(y)|}{|y|^2} \,\mathrm{d}y .
		\end{split}
	\end{equation*} 
	Note that by H\"{o}lder's inequality,
	\begin{equation*}
		|x|\int_{Q(x)}  \frac{1}{|y|^2} \frac{|\theta(y)|}{|y|^2}\,\mathrm{d}y \le |x| \left( \int_{2x_1}^\infty \frac 1{r^3} \, \mathrm{d}r \right)^{\frac 12}\Vert |y|^{-2}\theta(y) \Vert_{L^2(Q(x))} \le C\Vert |y|^{-2}\theta(y) \Vert_{L^2([0,1]^2)}. 
	\end{equation*} Then with the Hardy's inequality we have
	\begin{equation*}
		|J_2| \leq C \| \nabla^2 \theta \|_{L^2([0,1]^2)}.
	\end{equation*}
	On the other hand, regarding $J_1$, we shall show that \begin{equation*}
		\begin{split}
			\left| J_1 - \frac{3}{2} \int_{Q(x)} \frac{y_1y_2}{|y|^5}\theta(y) \,\mathrm{d}y \right| \le C \| \nabla^2 \theta \|_{L^2([0,1]^2)}.
		\end{split}
	\end{equation*} For this purpose we simply write $J_1=J_{11}+J_{12}+J_{13}$ where \begin{equation*}
		\begin{split}
			J_{11} &= \int_{Q(x)} \frac { y_1y_2  |x - y|^2  }{|x - y|^3 |x - \tilde{y}|^3 (|x - y| + |x - \tilde{y}|)} \theta(y)\, \mathrm{d}y ,\\
			J_{12} &= \int_{Q(x)} \frac { y_1y_2  |x - y| |x - \tilde{y}| }{|x - y|^3 |x - \tilde{y}|^3 (|x - y| + |x - \tilde{y}|)} \theta(y)\, \mathrm{d}y ,\\
			J_{13} & = \int_{Q(x)} \frac { y_1y_2   |x - \tilde{y}|^2}{|x - y|^3 |x - \tilde{y}|^3 (|x - y| + |x - \tilde{y}|)} \theta(y)\, \mathrm{d}y 
		\end{split}
	\end{equation*}  and show that \begin{equation*}\label{eq:J_1k}
		\begin{split}
			\left| J_{1k} - \frac{1}{2} \int_{Q(x)} \frac{y_1y_2}{|y|^5}\theta(y) \,\mathrm{d}y \right| \le C \| \nabla^2 \theta \|_{L^2([0,1]^2)}
		\end{split}
	\end{equation*} for each $k = 1, 2, 3$. We supply the proof only for the case $k = 1$, since the others can be treated similarly. We directly compute \begin{equation*}
		\begin{split}
			J_{11} - \frac{1}{2} \int_{Q(x)} \frac{y_1y_2}{|y|^5}\theta(y) \,\mathrm{d}y & =  \int_{Q(x)} y_1y_2 \frac{2|y|^5 - |x-y||x-\tilde{y}|^3(|x-y|+|x-\tilde{y}|)}{2|y|^5|x-y||x-\tilde{y}|^3(|x-y|+|x-\tilde{y}|)}\theta(y) \,\mathrm{d}y.
		\end{split}
	\end{equation*} We rewrite the numerator as \begin{gather*}
		2|y|^5 - |x-y||x-\tilde{y}|^3(|x-y|+|x-\tilde{y}|) \\ = (|y|^2 - |x-y|^2)|y|^3 + |x-y|^2(|y|^3 - |x-\tilde{y}|^3)  + (|y|-|x-y|)|y|^4 + |x-y| (|y|^4 - |x-\tilde{y}|^4) 
	\end{gather*} and further rewriting \begin{equation*}
		|y|-|x-y| = \frac{|y|^2 - |x-y|^2}{|y|+|x-y|}, \qquad |y|^3-|x-\tilde{y}|^3 = \frac{(|y|^2-|x-\tilde{y}|^2)(|y|^2+ |y||x-\tilde{y}|+|x-\tilde{y}|^2)}{|y|+|x-\tilde{y}|},
	\end{equation*} we see using \eqref{eq:equiv-y} that \begin{equation*}
		\left|2|y|^5 - |x-y||x-\tilde{y}|^3(|x-y|+|x-\tilde{y}|)\right| \le C|x||y|^4.
	\end{equation*} Then, we can infer that \begin{equation*}
		\left|J_{11} - \frac{1}{2} \int_{Q(x)} \frac{y_1y_2}{|y|^5}\theta(y) \,\mathrm{d}y \right| \le C|x|\int_{Q(x)}  \frac{1}{|y|^2} \frac{|\theta(y)|}{|y|^2}\,\mathrm{d}y \leq C \| \nabla^2 \theta \|_{L^2([0,1]^2)}.
	\end{equation*} 
	Collecting the estimates for $J_1$ and $J_2$ gives \eqref{eq:J-goal}.
	
	\medskip
	
	\noindent \textit{(ii) $[0,2x_1] \times [2x_1,1]$} 
	
	\medskip
	
	\noindent In the case of $y \in [0,2x_1] \times [2x_1,1]$, using $y_1 \leq y_2$, we can see that
	\begin{equation*}
		\frac 12{y_2} \leq |x-y| \leq 2y_2, \qquad \frac 12{y_2} \leq |x-\tilde{y}| \leq 2y_2.
	\end{equation*}
	Thus, H\"{o}lder's inequality and Hardy's inequality imply that
	\begin{equation}\label{est_ii}
		\begin{aligned}
			&\left|-\int_{[0,2x_1] \times [2x_1,1]} \frac { y_1(x_2 - y_2) (|x - y|^2 + |x - y| |x - \tilde{y}| + |x - \tilde{y}|^2)}{|x - y|^3 |x - \tilde{y}|^3 (|x - y| + |x - \tilde{y}|)} \theta(y) \,\mathrm{d}y \right| \\
			&\hphantom{\qquad\qquad\quad}\leq C \int_{[0,2x_1] \times [2x_1,1]} \frac {1}{y_2} \frac {\theta(y)}{y_2^2} \,\mathrm{d}y \\
			&\hphantom{\qquad\qquad\quad}\leq C \left( \int_{[0,2x_1] \times [2x_1,1]} \frac 1{y_2^2} \,\mathrm{d}y \right)^{\frac 12} \| y_2^{-2} \theta(y) \|_{L^2([0,1]^2)} \\
			&\hphantom{\qquad\qquad\quad}\leq C \| \nabla^2 \theta \|_{L^2([0,1]^2)} .
		\end{aligned}
	\end{equation}
	
	\medskip
	
	\noindent \textit{(iii) $[0,2x_1]^2$} 
	
	\medskip
	
	\noindent Due to $\theta(y_1,0) =0$, using integration by parts gives
	\begin{align*}
		&-\int_{[0,2x_1]^2} \frac { y_1(x_2 - y_2) (|x - y|^2 + |x - y| |x - \tilde{y}| + |x - \tilde{y}|^2)}{|x - y|^3 |x - \tilde{y}|^3 (|x - y| + |x - \tilde{y}|)} \theta(y) \,\mathrm{d}y \\
		&\hphantom{\qquad\qquad}= \frac 1{x_1} \int_{[0,2x_1]^2} \left( \frac 1{|x-y|} - \frac 1{|x-\tilde{y}|} \right) \partial_2 \theta(y)\, \mathrm{d}y \\
		&\hphantom{\qquad\qquad\qquad\qquad}- \frac 1{x_1} \int_0^{2x_1} \left( \frac 1{|(x_1-y_1,x_2-2x_1)|} - \frac 1{|(x_1+y_1,x_2-2x_1)|} \right) \theta(y_1,2x_1) \,\mathrm{d}y_1 .
	\end{align*}
	By H\"{o}lder's inequality we estimate the second integral as 
	\begin{equation*}
		\left| - \frac 1{x_1} \int_0^{2x_1} \left( \frac 1{|(x_1-y_1,x_2-2x_1)|} - \frac 1{|(x_1+y_1,x_2-2x_1)|} \right) \theta(y_1,2x_1) \,\mathrm{d}y_1 \right| \leq C x_1^{-1} \| \theta \|_{L^\infty([0,2x_1]^2)}.
	\end{equation*}
	We notice that since $\theta$ vanishes near the axis it follows
	\begin{equation}\label{tht_v}
		\begin{aligned}
			\| \theta \|_{L^\infty([0,2x_1]^2)} &\leq \sup_{y_1 \in [0,2x_1]} \int_0^{2x_1} |\partial_2 \theta(y_1,y_2)| \,\mathrm{d}y_2 \\
			&\leq (2x_1)^{\frac 12} \bigg\| \sup_{y_1 \in [0,2x_1]} |\partial_2\theta(y_1,\cdot)| \bigg\|_{L^2(0,2x_1)} \\
			&\leq 2x_1 \| \partial_1 \partial_2 \theta \|_{L^2([0,2x_1]^2)}.
		\end{aligned}
	\end{equation}
	Thus, we have
	\begin{equation*}
		\left| - \frac 1{x_1} \int_0^{2x_1} \left( \frac 1{|(x_1-y_1,x_2-2x_1)|} - \frac 1{|(x_1+y_1,x_2-2x_1)|} \right) \theta(y_1,2x_1) \,\mathrm{d}y_1 \right| \leq C \| \nabla^2 \theta \|_{L^2([0,1]^2)}.
	\end{equation*}
	Calculating the first integral with H\"{o}lder's inequality, we see that
	\begin{align*}
		\left| \frac 1{x_1} \int_{[0,2x_1]^2} \left( \frac 1{|x-y|} - \frac 1{|x-\tilde{y}|} \right) \partial_2 \theta(y)\, \mathrm{d}y \right| &\leq \frac 2{x_1} \int_{[0,2x_1]^2} \frac {|\partial_2 \theta(y)|}{|x-y|}\, \mathrm{d}y \\
		&\leq \frac 2{x_1} \left( \int_0^{4x_1} r^{-\frac 13} \,\mathrm{d}r \right)^{\frac 34} \| \partial_2 \theta \|_{L^4([0,2x_1]^2)} \\
		&\leq C x_1^{-\frac 12}  \| \nabla \theta \|_{L^4([0,2x_1]^2)}.
	\end{align*}
	Gagliardo-Nirenberg interpolation inequality implies
	\begin{equation*}
		x_1^{-\frac 12}  \| \nabla \theta \|_{L^4([0,2x_1]^2)} \leq C x_1^{-\frac 12} \| \nabla^2 \theta \|_{L^2([0,2x_1]^2)}^{\frac 34} \| \theta \|_{L^2([0,2x_1]^2)}^{\frac 14} + C x_1^{-2} \| \theta \|_{L^2([0,2x_1]^2)},
	\end{equation*} where the constant $C>0$ is independent of $x_1$. Applying Hardy's inequality to it, we have
	\begin{align*}
		x_1^{-\frac 12}  \| \partial_2 \theta \|_{L^4([0,2x_1]^2)} 
		&\leq C \| \nabla^2 \theta \|_{L^2([0,2x_1]^2)},
	\end{align*} and hence it follows that 
	\begin{equation*}
		\left| \frac 1{x_1} \int_{[0,2x_1]^2} \left( \frac 1{|x-y|} - \frac 1{|x-\tilde{y}|} \right) \partial_2 \theta(y)\, \mathrm{d}y \right| \leq C \| \nabla^2 \theta \|_{L^2([0,1]^2)}.
	\end{equation*}
	Combining the above estimates, we obtain that
	\begin{equation*}
		\left| -\int_{[0,2x_1]^2} \frac { y_1(x_2 - y_2) (|x - y|^2 + |x - y| |x - \tilde{y}| + |x - \tilde{y}|^2)}{|x - y|^3 |x - \tilde{y}|^3 (|x - y| + |x - \tilde{y}|)} \theta(y) \,\mathrm{d}y \right| \leq C \| \nabla^2 \theta \|_{L^2([0,1]^2)}.
	\end{equation*}
	\medskip
	We collect the estimates for each region and deduce that 
	\begin{align*}
		\left|\frac {I_1(0)}{x_1} - 6 \int_{Q(x)} \frac { y_1 y_2 }{|y|^5} \theta(y) \,\mathrm{d}y \right| \le C \| \nabla^2 \theta \|_{L^2([0,1]^2)}.
	\end{align*}
	We can estimate
	\begin{equation*}
		I_2(0) = 4x_1 \int_{[0,1]^2} \frac { y_1(x_2 + y_2) (|x + y|^2 + |x + y| |x - \bar{y}| + |x - \bar{y}|^2)}{|x + y|^3 |x - \bar{y}|^3 (|x - y| + |x - \bar{y}|)} \theta(y) \,\mathrm{d}y 
	\end{equation*}
	similarly with $I_1(0)$, resulting in the bound \begin{align*}
		\left|\frac {I_2(0)}{x_2} - 6 \int_{Q(x)} \frac { y_1 y_2 }{|y|^5} \theta(y) \,\mathrm{d}y \right| \le C \| \nabla^2 \theta \|_{L^2([0,1]^2)}.
	\end{align*} We omit the details. Thus  we have \eqref{u1}.
	
	Now we estimate $u_2$. Note that
	\begin{equation*}
		u_2(x) =  \sum_{n \in \mathbb{Z}^2} (I_3(n) + I_4(n)),
	\end{equation*}
	where
	\begin{equation*}
		\begin{aligned}
			I_3(n) &:= \int_{[0,1]^2} \left( \frac {x_1 -(y_1 + 2n_1)}{|x - (y+2n)|^3} - \frac {x_1 -(y_1 + 2n_1)}{|x - (\bar{y}+2n)|^3} \right) \theta(y) \,\mathrm{d}y, \\
			I_4(n) &:= \int_{[0,1]^2} \left( \frac {x_1 -(-y_1 + 2n_1)}{|x - (-y+2n)|^3} - \frac {x_1 -(-y_1 + 2n_1)}{|x - (\tilde{y}+2n)|^3} \right) \theta(y) \,\mathrm{d}y .
		\end{aligned}
	\end{equation*}
	Since we can similarly see that
	\begin{equation*}
		\Bigg| \sum_{n \in \mathbb{Z}^2 \setminus \{0\}}(I_3(n) + I_4(n)) \Bigg| \leq C x_2 \| \theta \|_{L^\infty([0,1]^2)},
	\end{equation*}
	it suffices to estimate for $n = 0$. Using \eqref{AB_est}, we have
	\begin{equation*}
		I_3(0) = 4x_2 \int_{[0,1]^2} \frac { y_2(x_1 - y_1) (|x - y|^2 + |x - y| |x - \bar{y}| + |x - \bar{y}|^2)}{|x - y|^3 |x - \bar{y}|^3 (|x - y| + |x - \bar{y}|)} \theta(y) \,\mathrm{d}y.
	\end{equation*}
	We divide the domain into four regions as $[0,1]^2 = Q(x) \cup [0,x_1/2] \times [0,2x_1] \cup [x_1/2,2x_1] \times [0,2x_1] \cup [0,2x_1] \times [2x_1,1]$ and estimate the integral in each region. 
	
	\medskip
	
	\noindent \textit{(i) $Q(x)$} 
	
	\medskip
	
	\noindent In the case of $y \in Q(x)$, recalling that \eqref{eq:equiv-y} holds, we can prove 
	\begin{equation*}
		\begin{gathered}
			\left| \int_{Q(x)} \frac { y_2(x_1 - y_1) (|x - y|^2 + |x - y| |x - \bar{y}| + |x - \bar{y}|^2)}{|x - y|^3 |x - \bar{y}|^3 (|x - y| + |x - \bar{y}|)} \theta(y) \,\mathrm{d}y + \frac{3}{2} \int_{Q(x)} \frac{y_1y_2}{|y|^5}\theta(y) \,\mathrm{d}y \right|\\
			\le C \Vert \nabla^2 \theta \Vert_{L^2([0,1]^2)}.
		\end{gathered}
	\end{equation*} Since the proof is parallel to the one for \eqref{eq:J-goal}, we omit it.
	
	\medskip
	
	\noindent \textit{(ii) $[0,2x_1] \times [2x_1,1]$} 
	
	\medskip
	
	\noindent For $y \in [0,2x_1] \times [2x_1,1]$, it follow that
	\begin{equation*}
		\frac 12{y_2} \leq |x-y| \leq 2y_2, \qquad {y_2} \leq |x-\bar{y}| \leq 2y_2.
	\end{equation*}
	Hence, we can show
	\begin{equation*}
		\left|\int_{[0,2x_1] \times [2x_1,1]} \frac { y_2(x_1 - y_1) (|x - y|^2 + |x - y| |x - \bar{y}| + |x - \bar{y}|^2)}{|x - y|^3 |x - \bar{y}|^3 (|x - y| + |x - \bar{y}|)} \theta(y) \,\mathrm{d}y \right| \leq C \| \nabla^2 \theta \|_{L^2([0,1]^2)} 
	\end{equation*}
	in a similar way with \eqref{est_ii}.
	
	\medskip
	
	\noindent \textit{(iii) $[0,x_1/2] \times [0,2x_1]$} 
	
	\medskip
	
	\noindent Since $y \in [0,x_1/2] \times [0,2x_1]$ implies
	\begin{equation*}
		\frac 12 x_1 \leq |x-y| \leq 4x_1, \qquad \frac 12 x_1 \leq |x-\bar{y}| \leq 4x_1,
	\end{equation*}
	with $y_2 \leq 2x_1$ we have
	\begin{align*}
		&\left|\int_{[0,x_1/2] \times [0,2x_1]} \frac { y_2(x_1 - y_1) (|x - y|^2 + |x - y| |x - \bar{y}| + |x - \bar{y}|^2)}{|x - y|^3 |x - \bar{y}|^3 (|x - y| + |x - \bar{y}|)} \theta(y) \,\mathrm{d}y \right| \\
		&\hphantom{\qquad\qquad\quad}\leq Cx_1^{-1} \int_{[0,x_1/2] \times [0,2x_1]} \frac {\theta(y)}{y_2^2} \,\mathrm{d}y \\
		&\hphantom{\qquad\qquad\quad}\leq C x_1^{-1} \left( \int_{[0,x_1/2] \times [0,2x_1]} 1 \,\mathrm{d}y \right)^{\frac 12} \| y_2^{-2} \theta(y) \|_{L^2([0,1]^2)} \\
		&\hphantom{\qquad\qquad\quad}\leq C \| \nabla^2 \theta \|_{L^2([0,1]^2)} .
	\end{align*}
	
	\medskip
	
	\noindent \textit{(iv) $[x_1/2,2x_1] \times [0,2x_1]$} 
	
	\medskip
	
	\noindent We claim that
	\begin{equation*}
		\int_{[x_1/2,2x_1] \times [0,2x_1]} \frac { y_2(x_1 - y_1) (|x - y|^2 + |x - y| |x - \bar{y}| + |x - \bar{y}|^2)}{|x - y|^3 |x - \bar{y}|^3 (|x - y| + |x - \bar{y}|)} \theta(y) \,\mathrm{d}y =: K
	\end{equation*} satisfies
	\begin{equation}\label{eq:K-goal}
		|K| \leq C \| \nabla^2 \theta \|_{L^2([0,1]^2)} \left( 1+\log \frac{x_1}{x_2} \right) + C (\| \nabla^2 \theta \|_{L^2(R(x))} + \| y_2^{-1} \partial_1 \theta(y) \|_{L^2(R(x))}) \left( 1+\log \frac{x_1}{x_2} \right)^{\frac 32} .
	\end{equation}
	Using integration by parts, we have
	\begin{equation*}
		K = K_1+K_2+K_3,
	\end{equation*}
	where
	\begin{align*}
		K_1 &:=- \frac 1{x_2} \int_{[x_1/2,2x_1] \times [0,2x_1]} \left( \frac 1{|x-y|} - \frac 1{|x-\bar{y}|} \right) \partial_1 \theta(y) \,\mathrm{d}y, \\
		K_2 &:= \frac 1{x_2} \int_0^{2x_1} \left( \frac 1{|(x_1,x_2-y_2)|} - \frac 1{|(x_1,x_2+y_2)|} \right) \theta(2x_1,y_2) \,\mathrm{d}y_2, \\
		K_3 &:= -\frac 1{x_2} \int_0^{2x_1} \left( \frac 1{|(x_1/2,x_2-y_2)|} - \frac 1{|(x_1/2,x_2+y_2)|} \right) \theta(x_1/2,y_2) \,\mathrm{d}y_2.
	\end{align*}
	With \eqref{tht_v} we may estimate $K_2$ as follows
	\begin{equation}\label{K2}
		\begin{aligned}
			|K_2|&= \left|\int_0^{2x_1} \frac{4y_2 \theta(2x_1,y_2)}{|(x_1,x_2-y_2)||(x_1,x_2+y_2)|(|(x_1,x_2-y_2)|+|(x_1,x_2+y_2)|)}  \,\mathrm{d}y_2 \right| \\
			&\leq C x_1^{-2} \int_0^{2x_1} \theta(2x_1,y_2)  \,\mathrm{d}y_2 \\
			&\leq C x_1^{-1} \| \theta \|_{L^\infty([0,2x_1]^2)} \\
			&\leq C \| \nabla^2 \theta \|_{L^2([0,1]^2)}.
		\end{aligned}
	\end{equation} And similarly, it is obtained
	\begin{equation*}
		|K_3|\leq C \| \nabla^2 \theta \|_{L^2([0,1]^2)}.
	\end{equation*}
	Noting that
	\begin{equation*}
		K_1= \int_{[x_1/2,2x_1] \times [0,2x_1]} \frac {-4 y_2 \partial_1 \theta(y)}{|x - y| |x - \bar{y}| (|x - y| + |x - \bar{y}|)} \,\mathrm{d}y,
	\end{equation*} we set
	\begin{equation*}
		K_1 = K_{11} + K_{12},
	\end{equation*} where
	\begin{align*}
		K_{11} &:= \int_{[x_1/2,2x_1] \times [0,2x_2]} \frac {-4 y_2 \partial_1 \theta(y)}{|x - y| |x - \bar{y}| (|x - y| + |x - \bar{y}|)} \,\mathrm{d}y, \\
		K_{12} &:= \int_{[x_1/2,2x_1] \times [2x_2,2x_1]} \frac {-4 y_2 \partial_1 \theta(y)}{|x - y| |x - \bar{y}| (|x - y| + |x - \bar{y}|)} \,\mathrm{d}y.
	\end{align*}
	From $|x-\bar{y}| \geq x_2+y_2 \geq y_2$ we have
	\begin{equation*}
		|K_{11}|\leq C \int_0^{2x_2} \frac {\sup_{y_1 \in [x_1/2,2x_1]}|\partial_1 \theta(y_1,y_2)|}{x_2+y_2} \int_0^{2x_2} \frac {1}{|x - y|}\, \mathrm{d}y_1 \,\mathrm{d}y_2
	\end{equation*} and
	\begin{equation*}
		|K_{12}|\leq C \int_{2x_2}^{2x_1} \frac {\sup_{y_1 \in [x_1/2,2x_1]}|\partial_1 \theta(y_1,y_2)|}{x_2+y_2} \int_0^{2x_2} \frac {1}{|x - y|}\, \mathrm{d}y_1 \,\mathrm{d}y_2.
	\end{equation*}
	By Gagliardo-Nirenberg interpolation inequality with $y_2 \leq 2x_1$, we can see that
	\begin{align*}
		y_2^{-\frac 12} {\sup_{y_1 \in [x_1/2,2x_1]}|\partial_1 \theta(y_1,y_2)|} &\leq C (\| \partial_1^2 \theta(\cdot,y_2) \|_{L^2(x_1/2,2x_1)} + y_2^{-1} \| \partial_1 \theta(\cdot,y_2) \|_{L^2(x_1/2,2x_1)}) ,
	\end{align*}
	where the constant $C>0$ does not depend on $x_1$. On the other hand,
	\begin{equation}\label{log_est}
		\begin{aligned}
			\int_0^{2x_1} \frac 1{|x-y|} \,\mathrm{d}y_1 &= 2\int_0^{x_1} \frac 1{\sqrt{\tau^2+(x_2-y_2)^2}} \,\mathrm{d}\tau \\
			&= 2\log \left(x_1+\sqrt{x_1^2+(x_2-y_2)^2} \right) - 2\log |x_2-y_2| \\
			&\leq C\log\left( 1+ \frac{x_1}{|x_2-y_2|} \right).
		\end{aligned}
	\end{equation}
	Hence, with $y_2 \leq 2x_1$ and H\"{o}lder's inequality, we infer that
	\begin{align*}
		|K_{11}|&\leq C (\| \nabla^2 \theta \|_{L^2([0,1]^2)} + \| y_2^{-1} \theta(y) \|_{L^2([0,1]^2)}) \left\{ \int_0^{2x_2} \frac 1{x_2+y_2} \left| \log\left( 1+\frac{x_1}{|x_2-y_2|} \right) \right|^2 \mathrm{d}y_2 \right\}^{\frac 12}
	\end{align*}
	and
	\begin{align*}
		|K_{12}|&\leq C (\| \nabla^2 \theta \|_{L^2(R(x))} + \| y_2^{-1} \theta(y) \|_{L^2(R(x))}) \left\{ \int_0^{2x_2} \frac 1{x_2+y_2} \left| \log\left( 1+\frac{x_1}{|x_2-y_2|} \right) \right|^2 \mathrm{d}y_2 \right\}^{\frac 12}.
	\end{align*}
	Using Hardy's inequality and that
	\begin{align*}
		\int_0^{2x_2} \frac 1{x_2+y_2} \left| \log\left( 1+\frac{x_1}{|x_2-y_2|} \right) \right|^2 \mathrm{d}y_2  &\leq \frac 1{x_2} \int_{0}^{2x_2} \left( \log \frac{2x_1}{|x_2-y_2|} \right)^2 \mathrm{d}y_2 \\
		&= \frac 2{x_2}\int_{0}^{x_2} \left( \log \frac {2x_1}{x_2 - y_2} \right)^2 \mathrm{d}y_2 \\
		&= 4 + 4 \log \frac{2x_1}{x_2} + 2 \left(\log \frac{2x_1}{x_2} \right)^2 \\
		&\leq C \left(1+\log \frac{x_1}{x_2} \right)^2,
	\end{align*}
	we obtain
	\begin{equation*}
		|K_{11}|\leq C \| \nabla^2 \theta \|_{L^2([0,1]^2)} \left( 1+\log \frac{x_1}{x_2} \right).
	\end{equation*}
	By the following estimate
	\begin{align*}
		\int_{2x_2}^{2x_1} \frac 1{x_2+y_2} \left| \log\left( 1+\frac{x_1}{|x_2-y_2|} \right) \right|^2 \mathrm{d}y_2  &\leq \int_{2x_2}^{2x_1} \frac1{y_2-x_2} \left( \log \frac{3x_1}{y_2-x_2} \right)^2 \mathrm{d}y_2 \\
		&= \frac 13 \left( \log \frac{3x_1}{x_2} \right)^3 - \frac 13 \left( \log \frac{3x_1}{2x_1-x_2} \right)^3 \\
		&\leq C \left(1+ \log \frac{x_1}{x_2} \right)^3,
	\end{align*}
	we have
	\begin{equation*}
		|K_{12}|\leq C (\| \nabla^2 \theta \|_{L^2(R(x))} + \| y_2^{-1} \theta(y) \|_{L^2(R(x))}) \left( 1+\log \frac{x_1}{x_2} \right)^{\frac 32}.
	\end{equation*}
	This implies
	\begin{equation*}
		|K_{1}| \leq C \| \nabla^2 \theta \|_{L^2([0,1]^2)} \left( 1+\log \frac{x_1}{x_2} \right) + C (\| \nabla^2 \theta \|_{L^2(R(x))} + \| y_2^{-1} \partial_1 \theta(y) \|_{L^2(R(x))}) \left( 1+\log \frac{x_1}{x_2} \right)^{\frac 32} ,
	\end{equation*}
	and collecting the estimates for $K_{1}$, $K_{2}$, and $K_3$, we obtain \eqref{eq:K-goal}. Therefore, we arrive at \begin{gather*}
		\left|\frac {I_3(0)}{x_2} + 6 \int_{Q(x)} \frac { y_1 y_2 }{|y|^5} \theta(y) \mathrm{d}y \right| \\
		\leq C \| \nabla^2 \theta \|_{L^2([0,1]^2)} \left( 1+\log \frac{x_1}{x_2} \right) + C (\| \nabla^2 \theta \|_{L^2(R(x))} + \| y_2^{-1} \partial_1 \theta(y) \|_{L^2(R(x))}) \left( 1+\log \frac{x_1}{x_2} \right)^{\frac 32} .
	\end{gather*}	
	Using \eqref{AB_est}, we can estimate
	\begin{equation*}
		I_4(0) = - 4x_2 \int_{[0,1]^2} \frac { y_2(x_1 + y_1) (|x + y|^2 + |x + y| |x - \tilde{y}| + |x - \bar{y}|^2)}{|x + y|^3 |x - \tilde{y}|^3 (|x + y| + |x - \tilde{y}|)} \theta(y) \mathrm{d}y  
	\end{equation*}
	similarly with $I_3(0)$. Hence we have \eqref{u2}, and this completes the proof.
\end{proof}

\begin{lemma}\label{key_lem2}
	Moreover, we have
	\begin{equation}\label{u1_2}
		\left|\frac {u_1(x)}{x_1} - 12\int_{Q(x)} \frac { y_1 y_2 }{|y|^5} \theta(y) \mathrm{d}y\right| \le B_4(x)
	\end{equation}
	and
	\begin{equation}\label{u2_2}
		\left|\frac {u_2(x)}{x_2} + 12\int_{Q(x)} \frac { y_1 y_2 }{|y|^5} \theta(y) \mathrm{d}y  \right| \le \left( 1+\log \frac{x_1}{x_2} \right) B_5(x) + \left( 1+\log \frac{x_1}{x_2} \right)^2 B_6(x),
	\end{equation}
	where $B_4$, $B_5$, $B_6$ satisfy
	\begin{equation*}
		|B_4(x)| + |B_5(x)| \leq  C(\| \nabla \theta \|_{L^\infty([0,1]^2)} + \| \theta \|_{L^\infty([0,1]^2)})
	\end{equation*} and
	\begin{equation*}
		|B_6(x)| \leq C \| \nabla \theta \|_{L^\infty(R(x))}.
	\end{equation*}
\end{lemma}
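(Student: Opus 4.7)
The plan is to follow the proof of Lemma \ref{key_lem} line by line, replacing each application of Hardy's inequality plus H\"older's inequality with a direct pointwise bound. The essential observation that makes this substitution possible is that since $\theta$ vanishes on both axes, the fundamental theorem of calculus yields the pointwise estimate $|\theta(y)| \le \min(y_1,y_2)\,\|\nabla\theta\|_{L^\infty}$. This is the pointwise counterpart of Hardy's inequality and lets us control every weighted integral of $\theta$ by $\|\nabla\theta\|_{L^\infty}$ wherever $\|\nabla^2\theta\|_{L^2}$ was used previously.

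The far-field terms ($n\neq 0$) are unchanged and contribute $Cx_j\|\theta\|_{L^\infty}$. For $I_1(0)$ decomposed over $Q(x)$, $[0,2x_1]\times[2x_1,1]$, and $[0,2x_1]^2$: on $Q(x)$ the extraction of the leading term $\tfrac{3}{2}\int_{Q(x)}\frac{y_1y_2}{|y|^5}\theta$ goes through verbatim, and the remainder, which has the schematic form $|x|\int_{Q(x)}\frac{|\theta(y)|}{|y|^4}\,\mathrm{d}y$, is bounded using $|\theta(y)|\le |y|\|\nabla\theta\|_{L^\infty}$ by $C\|\nabla\theta\|_{L^\infty}$. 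On $[0,2x_1]\times[2x_1,1]$ the same substitution gives the required bound. On $[0,2x_1]^2$ we integrate by parts as before; the resulting $\frac{1}{x_1}\int_{[0,2x_1]^2}\frac{|\partial_2\theta|}{|x-y|}\,\mathrm{d}y$ is estimated by covering by a disk of radius $4x_1$ to get $\int\frac{1}{|x-y|}\,\mathrm{d}y\le Cx_1$, yielding $C\|\nabla\theta\|_{L^\infty}$; the boundary term is controlled using $\|\theta\|_{L^\infty([0,2x_1]^2)}\le 2x_1\|\nabla\theta\|_{L^\infty}$. This gives $B_4$.

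For $u_2$, the one genuinely new point is the logarithmic bookkeeping in $K_{12}$. Using $|\partial_1\theta|\le \|\nabla\theta\|_{L^\infty(R(x))}$ pointwise and the bound $|x-\bar y|\gtrsim x_2+y_2$, one obtains
\begin{equation*}
    |K_{12}| \le C\|\nabla\theta\|_{L^\infty(R(x))}\int_{2x_2}^{2x_1}\frac{1}{y_2-x_2}\log\!\left(1+\frac{x_1}{y_2-x_2}\right)\mathrm{d}y_2,
\end{equation*}
whose explicit antiderivative $-\tfrac12(\log\frac{3x_1}{y_2-x_2})^2$ evaluates to $\le C(1+\log\frac{x_1}{x_2})^2$. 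This directly yields the $(1+\log\frac{x_1}{x_2})^2$ factor in $B_6$; the exponent is $2$ rather than $3/2$ precisely because no Cauchy-Schwarz square root is being taken in the $L^\infty$ setting. The analogous $K_{11}$ piece produces only a single power of the logarithm and gets absorbed into $B_5$, and the boundary terms $K_2, K_3$ are bounded by $\|\nabla\theta\|_{L^\infty}$ exactly as in the previous lemma.

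The main obstacle is purely bookkeeping: one must verify that at each location in the proof of Lemma \ref{key_lem} where $\|\nabla^2\theta\|_{L^2}$ (or Hardy plus a Gagliardo--Nirenberg interpolation) was invoked, the corresponding pointwise inequality applies and the geometric integral that replaces the $L^2$ factor behaves as claimed. No new analytic idea is required; the estimates are strictly easier in the $L^\infty$ framework, and the only quantity demanding care is the explicit logarithmic integral appearing in $K_{12}$, which accounts for the change in logarithmic exponent from $3/2$ to $2$ between \eqref{u2} and \eqref{u2_2}.
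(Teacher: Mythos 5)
Your proposal is correct and follows essentially the same route as the paper: the paper's proof of Lemma \ref{key_lem2} likewise retraces Lemma \ref{key_lem} region by region, replacing each Hardy/H\"older step with the pointwise bound $|\theta(y)|\lesssim \min(y_1,y_2)\,\|\nabla\theta\|_{L^\infty}$ coming from the vanishing of $\theta$ near the axes, and recomputes the logarithmic integrals in $K_{11}$ and $K_{12}$ without the Cauchy--Schwarz square root, which is exactly what raises the exponent from $3/2$ to $2$ in \eqref{u2_2}.
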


\begin{proof}
	We follow the proof of Lemma~\ref{key_lem}. To obtain \eqref{u1_2}, we recall \eqref{far_est} and have that
	\begin{equation*}
		|u_1(x)| = \Bigg| \sum_{n \in \mathbb{Z}^2} (I_1(n) + I_2(n)) \Bigg| \leq C x_1 \| \theta \|_{L^\infty([0,1]^2)} + I_1(0) + I_2(0).
	\end{equation*} We estimate 
	\begin{equation*}
		I_1(0) = -4x_1 \int_{[0,1]^2} \frac { y_1(x_2 - y_2) (|x - y|^2 + |x - y| |x - \tilde{y}| + |x - \tilde{y}|^2)}{|x - y|^3 |x - \tilde{y}|^3 (|x - y| + |x - \tilde{y}|)} \theta(y) \,\mathrm{d}y
	\end{equation*} for each set $R(2x)$ and $[0,2x_1] \times [0,1]$.
	
	\medskip
	
	\noindent \textit{(i) $Q(x)$} 
	
	\medskip
	
	\noindent Using the notation $J_1$ and $J_2$ in Lemma~\ref{key_lem}, it sufficies to obtain that
	\begin{equation}\label{eq:J-goal_2}
		\left| J_1 - \frac{3}{2} \int_{Q(x)} \frac{y_1y_2}{|y|^5}\theta(y) \,\mathrm{d}y \right| + |J_2| \le C \Vert \nabla \theta \Vert_{L^\infty([0,1]^2)}.
	\end{equation}
	We already showed that \begin{equation*}
		|J_2| \le C|x| \int_{Q(x)} \frac 1{|y|^3} \frac{|\theta(y)|}{|y|}\, \mathrm{d}y .
	\end{equation*}
	By \eqref{eq:equiv-y} and H\"{o}lder's inequaltiy, we have
	\begin{equation*}
		|x| \int_{Q(x)} \frac 1{|y|^3} \frac{|\theta(y)|}{|y|}\, \mathrm{d}y \leq |x| \left( \int_{|x|}^\infty \frac 1{r^2} \, \mathrm{d}r \right) \Vert |y|^{-1}\theta(y) \Vert_{L^\infty(Q(x))} \le \Vert |y|^{-1}\theta(y) \Vert_{L^\infty([0,1]^2)}.
	\end{equation*}
	Due to that $\theta$ vanishes near the axis, it follows
	\begin{equation*}
		|J_2| \leq C \| \nabla \theta \|_{L^\infty([0,1]^2)}.
	\end{equation*}
	Separating $J_1=J_{11}+J_{12}+J_{13}$ as in the proof of Lemma~\ref{key_lem}, we can prove that 
	\begin{equation*}
		\begin{split}
			\left|J_{1k} - \frac{1}{2} \int_{Q(x)} \frac{y_1y_2}{|y|^5}\theta(y) \,\mathrm{d}y \right| \le C|x|\int_{Q(x)}  \frac{1}{|y|^3} \frac{|\theta(y)|}{|y|}\,\mathrm{d}y \leq C \| \nabla \theta \|_{L^\infty([0,1]^2)}
		\end{split}
	\end{equation*} for each $k = 1, 2, 3$. Therefore, \eqref{eq:J-goal_2} is obtained.
	
	\medskip
	
	\noindent \textit{(ii) $[0,2x_1] \times [2x_1,1]$} 
	
	\medskip
	
	\noindent In \eqref{est_ii} we observed that
	\begin{gather*}
		\left|-\int_{[0,2x_1] \times [2x_1,1]} \frac { y_1(x_2 - y_2) (|x - y|^2 + |x - y| |x - \tilde{y}| + |x - \tilde{y}|^2)}{|x - y|^3 |x - \tilde{y}|^3 (|x - y| + |x - \tilde{y}|)} \theta(y) \,\mathrm{d}y \right| \\
		\leq C \int_{[0,2x_1] \times [2x_1,1]} \frac {1}{y_2^2} \frac {\theta(y)}{y_2} \,\mathrm{d}y.
	\end{gather*}
	Since
	\begin{equation}\label{est_ii_2}
		\begin{aligned}
			\int_{[0,2x_1] \times [2x_1,1]} \frac {1}{y_2^2} \frac {\theta(y)}{y_2} \,\mathrm{d}y \leq C \left( \int_{[0,2x_1] \times [2x_1,1]} \frac 1{y_2^2} \,\mathrm{d}y \right) \| y_2^{-1} \theta(y) \|_{L^\infty([0,1]^2)} \leq C \| \nabla \theta \|_{L^\infty([0,1]^2)} ,
		\end{aligned}
	\end{equation}
	we have
	\begin{equation*}
		\left|-\int_{[0,2x_1] \times [2x_1,1]} \frac { y_1(x_2 - y_2) (|x - y|^2 + |x - y| |x - \tilde{y}| + |x - \tilde{y}|^2)}{|x - y|^3 |x - \tilde{y}|^3 (|x - y| + |x - \tilde{y}|)} \theta(y) \,\mathrm{d}y \right| \leq C \| \nabla \theta \|_{L^\infty([0,1]^2)}.
	\end{equation*}
	
	\medskip
	
	\noindent \textit{(iii) $[0,2x_1]^2$} 
	
	\medskip
	
	\noindent We recall that
	\begin{align*}
		&-\int_{[0,2x_1]^2} \frac { y_1(x_2 - y_2) (|x - y|^2 + |x - y| |x - \tilde{y}| + |x - \tilde{y}|^2)}{|x - y|^3 |x - \tilde{y}|^3 (|x - y| + |x - \tilde{y}|)} \theta(y) \,\mathrm{d}y \\
		&\hphantom{\qquad\qquad}= \frac 1{x_1} \int_{[0,2x_1]^2} \left( \frac 1{|x-y|} - \frac 1{|x-\tilde{y}|} \right) \partial_2 \theta(y)\, \mathrm{d}y \\
		&\hphantom{\qquad\qquad\qquad\qquad}- \frac 1{x_1} \int_0^{2x_1} \left( \frac 1{|(x_1-y_1,x_2-2x_1)|} - \frac 1{|(x_1+y_1,x_2-2x_1)|} \right) \theta(y_1,2x_1) \,\mathrm{d}y_1.
	\end{align*}
	Using H\"{o}lder's inequality, we have
	\begin{align*}
		\left| \frac 1{x_1} \int_{[0,2x_1]^2} \left( \frac 1{|x-y|} - \frac 1{|x-\tilde{y}|} \right) \partial_2 \theta(y)\, \mathrm{d}y \right| &\leq \frac 2{x_1} \left( \int_{[0,2x_1]^2} \frac {1}{|x-y|}\, \mathrm{d}y \right) \| \partial_2 \theta \|_{L^\infty([0,1]^2)} \\
		&\leq C \| \nabla \theta \|_{L^\infty([0,2x_1]^2)}.
	\end{align*}
	From H\"{o}lder's inequality and the mean value theroem, it follows 
	\begin{equation*}
		\left| - \frac 1{x_1} \int_0^{2x_1} \left( \frac 1{|(x_1-y_1,x_2-2x_1)|} - \frac 1{|(x_1+y_1,x_2-2x_1)|} \right) \theta(y_1,2x_1) \,\mathrm{d}y_1 \right| \leq C \| \nabla \theta \|_{L^\infty([0,1]^2)}.
	\end{equation*}
	Therefore, we obtain that
	\begin{equation*}
		\left| -\int_{[0,2x_1]^2} \frac { y_1(x_2 - y_2) (|x - y|^2 + |x - y| |x - \tilde{y}| + |x - \tilde{y}|^2)}{|x - y|^3 |x - \tilde{y}|^3 (|x - y| + |x - \tilde{y}|)} \theta(y) \,\mathrm{d}y \right| \leq C \| \nabla^2 \theta \|_{L^2([0,1]^2)}.
	\end{equation*}
	\medskip
	Combining the above estimates, it follows that 
	\begin{align*}
		\left|\frac {I_1(0)}{x_1} - 6 \int_{Q(x)} \frac { y_1 y_2 }{|y|^5} \theta(y) \,\mathrm{d}y \right| \le C \| \nabla \theta \|_{L^\infty([0,1]^2)}.
	\end{align*}
	In a similar way, we can show that
	\begin{equation*}
		I_2(0) = 4x_1 \int_{[0,1]^2} \frac { y_1(x_2 + y_2) (|x + y|^2 + |x + y| |x - \bar{y}| + |x - \bar{y}|^2)}{|x + y|^3 |x - \bar{y}|^3 (|x - y| + |x - \bar{y}|)} \theta(y) \mathrm{d}y 
	\end{equation*}
	satisfies \begin{align*}
		\left|\frac {I_2(0)}{x_2} - 6 \int_{R(2x)} \frac { y_1 y_2 }{|y|^5} \theta(y) \mathrm{d}y \right| \le C \| \nabla \theta \|_{L^\infty([0,1]^2)}.
	\end{align*} We omit the details. Thus  we have \eqref{u1_2}.
	
	\medskip
	
	Now we estimate $u_2$. We already know that
	\begin{equation*}
		|u_2(x)| = \Bigg| \sum_{n \in \mathbb{Z}^2} (I_3(n) + I_4(n)) \Bigg| \leq C x_1 \| \theta \|_{L^\infty([0,1]^2)} + I_3(0) + I_4(0).
	\end{equation*} To estimate
	\begin{equation*}
		I_3(0) = 4x_2 \int_{[0,1]^2} \frac { y_2(x_1 - y_1) (|x - y|^2 + |x - y| |x - \bar{y}| + |x - \bar{y}|^2)}{|x - y|^3 |x - \bar{y}|^3 (|x - y| + |x - \bar{y}|)} \theta(y) \mathrm{d}y,
	\end{equation*}
	we divide the domain into four regions as $[0,1]^2 = Q(x) \cup [0,x_1/2] \times [0,2x_1] \cup [x_1/2,2x_1] \times [0,2x_1] \cup [0,2x_1] \times [2x_1,1]$ and estimate the integral in each region. 
	
	\medskip
	
	\noindent \textit{(i) $Q(x)$} 
	
	\medskip
	
	\noindent In the case of $y \in Q(x)$, recalling that \eqref{eq:equiv-y} holds, we can prove 
	\begin{equation}\label{eq:J-goal2_2}
		\begin{gathered}
			\left| \int_{Q(x)} \frac { y_2(x_1 - y_1) (|x - y|^2 + |x - y| |x - \bar{y}| + |x - \bar{y}|^2)}{|x - y|^3 |x - \bar{y}|^3 (|x - y| + |x - \bar{y}|)} \theta(y) \,\mathrm{d}y + \frac{3}{2} \int_{Q(x)} \frac{y_1y_2}{|y|^5}\theta(y) \,\mathrm{d}y \right|\\
			\le C \Vert \nabla \theta \Vert_{L^\infty([0,1]^2)}.
		\end{gathered}
	\end{equation}  We shall not give a proof as it requires little adjustment from \eqref{eq:J-goal_2}.
	
	\medskip
	
	\noindent \textit{(ii) $[0,2x_1] \times [2x_1,1]$} 
	
	\medskip
	
	\noindent Since $y \in [0,2x_1] \times [2x_1,1]$ satisfies
	\begin{equation*}
		\frac 12{y_2} \leq |x-y| \leq 2y_2, \qquad {y_2} \leq |x-\bar{y}| \leq 2y_2,
	\end{equation*} with \eqref{est_ii_2} we can show that	
	\begin{equation*}
		\left|\int_{[0,2x_1] \times [2x_1,1]} \frac { y_2(x_1 - y_1) (|x - y|^2 + |x - y| |x - \bar{y}| + |x - \bar{y}|^2)}{|x - y|^3 |x - \bar{y}|^3 (|x - y| + |x - \bar{y}|)} \theta(y) \,\mathrm{d}y \right| \leq C \| \nabla \theta \|_{L^\infty([0,1]^2)} .
	\end{equation*}
	
	\medskip
	
	\noindent \textit{(iii) $[0,x_1/2] \times [0,2x_1]$} 
	
	\medskip
	
	\noindent From that $y \in [0,x_1/2] \times [0,2x_1]$ implies
	\begin{equation*}
		\frac 12 x_1 \leq |x-y| \leq 4x_1, \qquad \frac 12 x_1 \leq |x-\bar{y}| \leq 4x_1,
	\end{equation*} we can see that
	\begin{equation*}
		\left|\int_{[0,x_1/2] \times [0,2x_1]} \frac { y_2(x_1 - y_1) (|x - y|^2 + |x - y| |x - \bar{y}| + |x - \bar{y}|^2)}{|x - y|^3 |x - \bar{y}|^3 (|x - y| + |x - \bar{y}|)} \theta(y) \,\mathrm{d}y \right| \\
		\leq C \| \nabla \theta \|_{L^\infty([0,1]^2)} .
	\end{equation*}
	
	\medskip
	
	\noindent \textit{(iv) $[x_1/2,2x_1] \times [0,2x_1]$} 
	
	\medskip
	
	\noindent Recalling the notation $K_1$, $K_2$, and $K_3$ in the proof of Lemma~\ref{key_lem}, we claim
	\begin{equation}\label{K1}
		|K_1| \leq C \| \nabla \theta \|_{L^\infty([0,1]^2)} \left( 1+\log \frac{x_1}{x_2} \right) + C \| \nabla \theta \|_{L^\infty(R(x))} \left( 1+\log \frac{x_1}{x_2} \right)^2
	\end{equation} and
	\begin{equation}\label{K23}
		|K_2| + |K_3| \leq C \| \nabla \theta \|_{L^\infty([0,1]^2)}.
	\end{equation} 
	As in \eqref{K2}, we have with the mean value theorem that
	\begin{align*}
		|K_2| \leq C x_1^{-1} \| \theta \|_{L^\infty([0,2x_1]^2)} \leq C \| \nabla \theta \|_{L^\infty([0,1]^2)},
	\end{align*} and similarly,
	\begin{equation*}
		|K_3|\leq C \| \nabla \theta \|_{L^\infty([0,1]^2)}.
	\end{equation*}
	Hence, \eqref{K23} follows. We reacll
	\begin{equation*}
		K_1 = K_{11} + K_{12}
	\end{equation*} where
	\begin{align*}
		K_{11} &= \int_{[x_1/2,2x_1] \times [0,2x_2]} \frac {-4 y_2 \partial_1 \theta(y)}{|x - y| |x - \bar{y}| (|x - y| + |x - \bar{y}|)} \,\mathrm{d}y, \\
		K_{12} &= \int_{[x_1/2,2x_1] \times [2x_2,2x_1]} \frac {-4 y_2 \partial_1 \theta(y)}{|x - y| |x - \bar{y}| (|x - y| + |x - \bar{y}|)} \,\mathrm{d}y.
	\end{align*}
	From H\"{o}lder's inequality and \eqref{log_est}, we can deduce that
	\begin{equation*}
		|K_{11}|\leq C  \| \nabla \theta \|_{L^\infty([0,1]^2)} \int_0^{2x_2} \frac {1}{x_2+y_2} \log\left( 1+ \frac{x_1}{|x_2-y_2|} \right) \mathrm{d}y_2
	\end{equation*} and
	\begin{equation*}
		|K_{12}|\leq C \| \nabla \theta \|_{L^\infty(R(x))} \int_{2x_2}^{2x_1} \frac {1}{x_2+y_2} \log\left( 1+ \frac{x_1}{|x_2-y_2|} \right)\mathrm{d}y_2 .
	\end{equation*}
	Since
	\begin{align*}
		\int_0^{2x_2} \frac 1{x_2+y_2} \log \left( 1+\frac{x_1}{|x_2-y_2|} \right) \mathrm{d}y_2 &\leq \frac 1{x_2} \int_{0}^{2x_2} \log \frac{2x_1}{|x_2-y_2|}\, \mathrm{d}y_2 \\
		&= 2 \log \frac {2x_1}{x_2} -2 \\
		&\leq C \left(1+\log \frac{x_1}{x_2} \right)
	\end{align*}
	and
	\begin{align*}
		\int_{2x_2}^{2x_1} \frac {1}{x_2+y_2} \log\left( 1+ \frac{x_1}{|x_2-y_2|} \right) \mathrm{d}y_2 &\leq \int_{2x_2}^{2x_1} \frac1{y_2-x_2} \log \frac {3x_1}{y_2-x_2} \, \mathrm{d}y_2 \\
		&= \frac 12 \left| \log \frac{3x_1}{x_2} \right|^2 - \frac 12 \left| \log \frac{3x_1}{2x_1-x_2} \right|^2 \\
		&\leq C \left(1+ \log \frac{x_1}{x_2} \right)^2,
	\end{align*}
	it follows
	\begin{equation*}
		|K_{11}|\leq C \| \nabla \theta \|_{L^\infty([0,1]^2)} \left( 1+\log \frac{x_1}{x_2} \right)
	\end{equation*} and
	\begin{equation*}
		|K_{12}|\leq C \| \nabla \theta \|_{L^\infty(R(x))} \left( 1+\log \frac{x_1}{x_2} \right)^2.
	\end{equation*}
	This shows \eqref{K1}.
	Combining the estimates, we arrive at \begin{gather*}
		\left|\frac {I_3(0)}{x_2} + 6 \int_{Q(x)} \frac { y_1 y_2 }{|y|^5} \theta(y) \mathrm{d}y \right| 
		\leq C \| \nabla \theta \|_{L^\infty([0,1]^2)} \left( 1+\log \frac{x_1}{x_2} \right) + C \| \nabla \theta \|_{L^\infty(R(x))} \left( 1+\log \frac{x_1}{x_2} \right)^2.
	\end{gather*}
	Using \eqref{AB_est}, we can estimate
	\begin{equation*}
		I_4(0) = - 4x_2 \int_{[0,1]^2} \frac { y_2(x_1 + y_1) (|x + y|^2 + |x + y| |x - \tilde{y}| + |x - \bar{y}|^2)}{|x + y|^3 |x - \tilde{y}|^3 (|x + y| + |x - \tilde{y}|)} \theta(y) \mathrm{d}y  
	\end{equation*}
	similarly with $I_3(0)$. Hence we have \eqref{u2_2}, and this completes the proof.
\end{proof}

\section{Nonexistence}\label{sec:nonexist}

In this section, we prove Theorem \ref{thm:nonexist}.
We begin with a simple uniqueness result which in particular guarantees that the hypothetical solution in $L^\infty_t H^2$ satisfies the same symmetries with the initial data. 
\begin{proposition}\label{prop:unique}
	Given $\tht_0\in H^2$ and $T>0,$ there exists at most one solution belonging to $L^\infty([0,T];H^2)$ to \eqref{eq:SQG} with initial data $\tht_0$. 
\end{proposition}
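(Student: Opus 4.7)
The plan is to derive an $L^2$ energy estimate for $\psi = \theta_1 - \theta_2$ that closes at the critical regularity thanks to a commutator cancellation identity. Setting $u_i = \nb^\perp \Lmb^{-1}\theta_i$ and subtracting the two SQG equations gives
\begin{equation*}
\rd_t \psi + u_1 \cdot \nb \psi = -(\calR\psi) \cdot \nb \theta_2, \qquad \calR := \nb^\perp \Lmb^{-1}.
\end{equation*}
Pairing with $\psi$ in $L^2$, the transport term vanishes since $u_1$ is divergence-free, leaving
\begin{equation*}
\frac{1}{2} \frac{d}{dt} \nrm{\psi}_{L^2}^2 = -\int_{\bbT^2} \psi \, (\calR\psi) \cdot \nb\theta_2 \, \ud x.
\end{equation*}

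The key observation is that each component $\calR_j$ is a zero-order Fourier multiplier with symbol $i\xi_j^\perp/|\xi|$, hence a Calder\'on--Zygmund operator, and is skew-adjoint on $L^2$ (that is, $\calR_j^* = -\calR_j$). Using this together with the identity $\calR_j(f\psi) = f\,\calR_j\psi + [\calR_j,f]\psi$ applied with $f = \rd_j\theta_2$, a direct computation yields the cancellation
\begin{equation*}
\int \psi \, (\calR_j \psi) \, \rd_j\theta_2 \, \ud x = -\frac{1}{2} \int \psi \, [\calR_j, \rd_j\theta_2] \psi \, \ud x.
\end{equation*}
The Coifman--Rochberg--Weiss commutator theorem then bounds $\nrm{[\calR_j, b]}_{L^2 \to L^2} \le C \nrm{b}_{\mathrm{BMO}}$ for any $b \in \mathrm{BMO}$.

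To apply this with $b = \rd_j\theta_2$ I need $\nb\theta_2 \in \mathrm{BMO}$. Since $\nb\theta_2 \in H^1(\bbT^2)$ and in two dimensions $H^1 \hookrightarrow \mathrm{BMO}$ via the Poincar\'e inequality on balls, this holds with $\nrm{\nb\theta_2}_{\mathrm{BMO}} \le C\nrm{\theta_2}_{H^2} \le CM$, where $M := \max_{i=1,2} \nrm{\theta_i}_{L^\infty_t H^2}$. Summing the commutator estimate over $j$ gives
\begin{equation*}
\left| \int \psi \, (\calR\psi) \cdot \nb\theta_2 \, \ud x \right| \le CM \, \nrm{\psi}_{L^2}^2,
\end{equation*}
so that $\tfrac{d}{dt}\nrm{\psi}_{L^2}^2 \le 2CM \nrm{\psi}_{L^2}^2$, and Gronwall together with $\psi(0) = 0$ forces $\psi \equiv 0$ on $[0,T]$.

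The main obstacle is that a direct H\"older estimate for $(\calR\psi)\cdot \nb\theta_2$, using only $\nrm{\calR\psi}_{L^2} \lesssim \nrm{\psi}_{L^2}$ and $\nrm{\nb\theta_2}_{L^2} \lesssim M$, bounds the right-hand side only by $CM^2\nrm{\psi}_{L^2}$, which is linear in $\nrm{\psi}_{L^2}$ and hence insufficient for Gronwall uniqueness. The commutator manipulation above -- exploiting simultaneously the skew-adjointness of $\calR$ and the $H^1 \hookrightarrow \mathrm{BMO}$ embedding available only in two dimensions -- is exactly what upgrades this to a quadratic right-hand side and closes the estimate.
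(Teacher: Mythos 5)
Your argument is correct, and it is in substance exactly the proof that the paper delegates to its references: the cited work of Azzam--Bedrossian (\cite{Bed}) establishes uniqueness for active scalars with velocity one derivative more singular than the scalar by precisely this mechanism -- the skew-adjointness of the constitutive operator $\nb^\perp\Lmb^{-1}$, the resulting reduction of $\int \psi\,(\calR_j\psi)\,\rd_j\tht_2$ to a commutator term, the Coifman--Rochberg--Weiss bound, and the critical embedding $H^1(\bbT^2)\hookrightarrow \mathrm{BMO}$. Your diagnosis of why the naive H\"older estimate fails (it is only linear in $\nrm{\psi}_{L^2}$) is also the right one. The only point you pass over is the routine justification that $t\mapsto\nrm{\psi(t)}_{L^2}^2$ is absolutely continuous and that the energy identity holds for solutions merely in $L^\infty_tH^2$; since $\rd_t\tht_i=-u_i\cdot\nb\tht_i\in L^\infty_tH^1$ by two-dimensional product estimates, this is standard and does not affect the argument.
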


\begin{proof}
	The proof can be given by simply adapting Yudovich's inequalities derived in (\cite{Y1,Y2}). This statement could be found in \cite{Bed} as well. 
\end{proof}

\begin{proof}[Proof of Theorem \ref{thm:nonexist}] For convenience, we shall divide the proof into several parts.

\medskip

\noindent \textbf{1. Velocity and flow map: an $L^\infty_{t}H^{2}$--solution is {Lagrangian}}. Assume that we are given a solution to \eqref{eq:SQG} satisfying $$\sup_{t\in[0,T]}\nrm{\tht(t,\cdot)}_{H^{2}}\le M.$$ Then, by the Sobolev embedding, $u = \nb^\perp(-\lap)^{-\frac12}\tht$  satisfies \begin{equation*}
	\begin{split}
		\sup_{t\in[0,T]}(\nrm{\nb u(t,\cdot)}_{BMO} + \nrm{u(t,\cdot)}_{W^{1,1}})\le C \sup_{t\in[0,T]}\nrm{u(t,\cdot)}_{H^{2}}\le CM
	\end{split}
\end{equation*} with some absolute constant $C>0$. In particular, $u$ is log-Lipschitz: for any $x,y \in \bbT^2$, we have \begin{equation}\label{eq:log-Lipschitz}
	\begin{split}
		\sup_{t\in[0,T]} |u(t,x)-u(t,y)| \le CM|x-y| \ln\left( 10 + \frac{1}{|x-y|} \right).
	\end{split}
\end{equation} On the time interval $[0,T]$, we consider the flow map $\Phi(t,\cdot):\bbT^2\rightarrow\bbT^2$ defined by \begin{equation}  \label{eq:flow-def}
	\left\{
	\begin{aligned}
		&\frac{d}{dt} \Phi (t,x) = u(t,\Phi(t,x)), \\
		&\Phi(0,x) = x. 
	\end{aligned}
	\right.
\end{equation} It is well-known that under the estimate \eqref{eq:log-Lipschitz}, there is a unique solution to the ODE \eqref{eq:flow-def} for any $x\in\bbT^2$ (\cite{MB,MP}). The solution $\Phi$ satisfies the estimate \begin{equation}\label{eq:flow-estimate}
	\begin{split}
		|x-y|^{\exp(CMt)} \le |\Phi(t,x)-\Phi(t,y)| \le |x-y|^{\exp(-CMt)}
	\end{split}
\end{equation} for some absolute constant $C>0$, uniformly in $x, y \in \bbT^2$  satisfying $|x-y|<\frac12$ and $t\in[0,T]$. We have the representation \begin{equation*}
	\begin{split}
		\tht(t,\Phi(t,x)) = \tht_0(x). 
	\end{split}
\end{equation*} The estimate \eqref{eq:flow-estimate} shows that for each $t\in[0,T]$, $\Phi(t,\cdot)$ is a H\"older continuous homeomorphism $\bbT^2\rightarrow\bbT^2$, and we denote the inverse map by $\Phi_t^{-1}$. Then, with this notation, we have \begin{equation*}
	\begin{split}
		\tht(t,x) = \tht_0(\Phi_t^{-1}(x)). 
	\end{split}
\end{equation*} The inverse map  $\Phi_t^{-1}$ is again  H\"older continuous. As an immediate consequence, we have that if $\tht_{0}$ is an odd function with respect to both axes and satisfies \begin{equation*}
\begin{split}
	\supp(\tht_{0}) \cup  \{ x: x_1=0 \mbox{ or } x_2 = 0 \} \subset \{ (0,0) \},
\end{split}
\end{equation*} then the same properties are satisfied by $\tht(t,\cdot)$, as long as $\tht \in L^{\infty}([0,t];H^{2} )$. Indeed, the uniqueness assertion from Proposition \ref{prop:unique} guarantees that $\tht(t,\cdot)$ is odd with respect to both axes. Furthermore, H\"older continuity of the flow map and its inverse ensures that $\tht(t,\cdot)$ vanishes near the axes, possibly except at the origin. Therefore, the last assumption in Lemma \ref{key_lem} is satisfied. 

\medskip

\noindent \textbf{2. Choice of initial data}.  We fix some smooth bump function $\phi: \bbR^2\rightarrow \bbR_{\ge0}$ satisfying the following properties: 
\begin{itemize}
	\item $\phi$ is $C^\infty$-smooth and radial. 
	\item $\phi$ is supported in $B_{0}(\frac{1}{8})$ and $\phi = 1$ in $B_{0}(\frac{1}{32})$. 
\end{itemize} Then, we define 
\begin{equation}\label{eq:nonexist-data}
	\begin{split}
		\tht_0 :=  \sum_{n=n_{0}}^{\infty}  n^{-\alp} \tht^{(n)}_{0, loc}
	\end{split}
\end{equation}
for some $1/2 < \alpha < 3/4$, where
\begin{equation*}
	\begin{split}
		\tht^{(n)}_{0, loc}(x) := 4^{-n} \phi( 4^n(x_1-4^{-n-1}, x_2-2^{-1}4^{-n-1} ) )
	\end{split}
\end{equation*}
for $x \in [0,1]^2$. The precise value of $\alp$ will be determined later, but for now let us just mention that it will be taken slightly larger than $\frac12$. Next, let us extend each of $\tht^{(n)}_{0, loc}$ (and similarly $\tht_{0}$) to $\bbT^2$ as an odd function with respect to both axes. Note that by taking $n_{0}\ge 1$ sufficiently large in a way depending only on $\eps>0$, we can guarantee that 
$$\| \theta_0 \|_{H^{2}\cap W^{1,\infty}(\bbT^2)} <\eps.$$
Towards a contradiction, we shall assume that there exists $M>0$ and $T>0$ such that \begin{equation}\label{eq:sol-bound}
	\begin{split}
		\sup_{t \in [0,T]}\| \theta(t) \|_{ \dot{H}^{2}  (\bbT^2)} \leq M.
	\end{split}
\end{equation} For simplicity, we shall assume that $M\ge 1$. Observe that the assumptions in  the Key Lemma \ref{key_lem} are satisfied by $\tht_{0}$. Recalling the discussion above, we have that $\tht(t,\cdot)$ is odd with respect to both axes and vanishes near the axes, except at the origin.

\medskip

\noindent \textbf{3. Preliminary bounds on the solution}. Let us remark in advance that in the following proof, we shall take $T>0$ to be smaller, if necessary, to satisfy $T \le c/M$ for some small absolute constant $c>0$.  We shall begin with a simple result:
\begin{lemma}\label{lem:basic}
	Assume that $\tht$ is a solution satisfying \eqref{eq:sol-bound} with initial data \eqref{eq:nonexist-data}. Then,  by redefining $T$ to satisfy $T \le c/M$ if necessary, we have \begin{equation*}
		\begin{split}
			\tht(t,y) = 0, \qquad 0 \le y_1 \le y_2, \qquad 0 \le t \le T. 
		\end{split}
	\end{equation*}
\end{lemma}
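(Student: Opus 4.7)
The plan is to prove that the flow map $\Phi_t$ leaves $\{y_1>y_2\}\cap(0,1)^2$ forward-invariant for $t\in[0,T]$. Combined with the representation $\theta(t,y)=\theta_0(\Phi_t^{-1}(y))$ established in Part~1 of the proof, and the fact that $\theta_0$ vanishes on $\{0\le y_1\le y_2\}$ (a direct geometric check: the bubble centers $c_n=(4^{-n-1},4^{-n-1}/2)$ sit at angle $\arctan(1/2)$, strictly below the diagonal, and the localized profiles $\theta^{(n)}_{0,loc}$ inherit this placement), this will yield the claim.

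Two easy consequences of the setup are used throughout. First, by Proposition~\ref{prop:unique} the unique $L^\infty_t H^2$ solution inherits the odd-odd symmetry of $\theta_0$; this forces $u_1=0$ on $\{x_1=0\}$ and $u_2=0$ on $\{x_2=0\}$, so $\Phi_t$ preserves each open quadrant. Second, the symmetry together with nonnegativity of $\theta_0$ on the first quadrant gives $\theta(t,\cdot)\ge 0$ there via the Lagrangian formula, which is what lets us apply Lemma~\ref{key_lem} with the correct sign for the principal integral.

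Now the bootstrap. Let $T^{\ast}\in[0,T]$ be the supremum of times $t$ for which the conclusion holds on $[0,t]$, and suppose for contradiction $T^{\ast}<T$. By continuity one produces a Lagrangian trajectory $x(t)=\Phi_t(x_0)$ with $x_0\in\supp(\theta_0)\subseteq\{y_1>y_2\}$ whose first time of reaching the diagonal is some $t_{\ast}\le T^{\ast}$: that is, $x(t_{\ast})=(a,a)$ for some $a>0$, and $\psi(t):=x_1(t)-x_2(t)$ satisfies $\psi(t_{\ast})=0$, $\dot\psi(t_{\ast})\le 0$. Applying Lemma~\ref{key_lem} to points $(y_1,y_2)$ with $y_1>y_2>0$ and passing to the limit $y_2\to y_1^{-}$ (the $\log(y_1/y_2)$ factors in \eqref{u2} vanish on the diagonal while the $B_i$'s are bounded by $CM$) yields
\begin{equation*}
\dot\psi(t_{\ast}) \;\ge\; 24\,a\,I(t_{\ast},(a,a)) \;-\; C M a,\qquad I(t,y):=\int_{Q(y)}\frac{z_1z_2}{|z|^5}\,\theta(t,z)\,\mathrm{d}z \;\ge\; 0.
\end{equation*}

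The main obstacle is to produce the quantitative lower bound $I(t_{\ast},(a,a))\gtrsim |\log a|^{1-\alpha}$, which for $a$ small dominates $CM$ and yields the desired contradiction $\dot\psi(t_{\ast})>0$. At $t=0$ the bound is a direct computation from \eqref{eq:nonexist-data}: each bubble $m\ge n_0$ contributes $\sim m^{-\alpha}$ to $I(0,y)$ whenever $|y|\ll 4^{-m}$, and summation gives $I(0,y)\sim|\log|y||^{1-\alpha}$ as $y\to 0$. To propagate this estimate to $t_{\ast}\le T\le c/M$ one uses conservation of mass under the incompressible transport together with the bootstrap hypothesis, which keeps each bubble on its original side of the diagonal; only a crude, one-sided control on the centers of the bubbles is needed at this preliminary stage, rather than the full bubble-tracking machinery of Claims~I, II, III developed later in the paper.
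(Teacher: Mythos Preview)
Your approach has a genuine gap. You derive $\dot\psi(t_\ast) \ge 24aI(t_\ast,(a,a)) - CMa$ at the first diagonal-hit time and then seek a contradiction via the lower bound $I(t_\ast,(a,a)) \gtrsim |\log a|^{1-\alpha}$. But this lower bound is only available when $a$ is small, and you say so yourself. Nothing prevents the first trajectory to touch the diagonal from originating in the \emph{outermost} bubble $n_0$: in that case $a \sim 4^{-n_0}$ is of unit size, and the integral $I(t_\ast,(a,a))$---taken over $Q((a,a)) = [2a,1]\times[0,1]$, a region lying entirely to the right of every bubble---vanishes. You are left with $\dot\psi(t_\ast) \ge -CMa$, which does not contradict $\dot\psi(t_\ast) \le 0$. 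The ``crude bubble control'' you invoke cannot repair this: it may keep the bubbles roughly in place, but it cannot manufacture mass in $Q((a,a))$ when none exists.

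The paper avoids the difficulty by tracking the \emph{ratio} $r(t)=\Phi_2(t,x)/\Phi_1(t,x)$ instead of the difference $\psi$. The Key Lemma gives
\[
\frac{\dot r}{r} \;=\; \frac{u_2}{\Phi_2}-\frac{u_1}{\Phi_1} \;\le\; -24\,I \;+\; (\text{error terms bounded by }CM),
\]
and since $I\ge 0$ by nonnegativity of $\theta$ in the first quadrant, the principal term can simply be \emph{discarded}---no lower bound is needed. One gets $\dot r \le CMr$ (the $\log(x_1/x_2)$ factors in \eqref{u2} are harmless once one restricts to the bootstrap window $r\in[1/2,3/4]$), and Gr\"onwall yields $r(t) \le \tfrac12 e^{CMt} < 3/4$ for $t\le c/M$. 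This works uniformly for every bubble, inner or outer, with no case distinction and no bubble-tracking. Your instinct to produce a quantitative lower bound on $I$ is exactly what drives the later Claims~II and~III, but for this preliminary lemma it is both unnecessary and, as shown above, insufficient.
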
 \begin{proof}
Since $\tht(t,\Phi(t,x)) = \tht_0(x)$, to prove the claim, it suffices to show that for $x \in \supp( \tht_0 )\backslash \{ (0,0) \}$, $\Phi_{2}(t,x) \le \Phi_{1}(t,x)$ for $0\le t \le T$. Let us fix some $x \in \supp( \tht_0 )\backslash \{ (0,0) \}$. Then, from the choice of initial data, we have $2x_2 \le x_1$. From continuity in time of the flow map, there exists some $0<T^* \le T$ such that $\Phi_{2}(t,x) < \Phi_{1}(t,x)$ for $0\le t < T^*$. Then, on this time interval, Key Lemma is applicable for $\Phi(t,x)$ and we compute \begin{equation*}
	\begin{split}
		\frac{d}{dt}\left( \frac{ \Phi_2(t,x)}{\Phi_1(t,x)} \right) & = \frac{ \Phi_2(t,x)}{\Phi_1(t,x)}\left(  \frac{u_2(t,\Phi(t,x))}{\Phi_2(t,x)} -   \frac{u_1(t,\Phi(t,x))}{\Phi_1(t,x)} \right) \\
		& \le C\frac{ \Phi_2(t,x)}{\Phi_1(t,x)}\left( |B_1(\Phi(t,x))| + |B_2(\Phi(t,x))| +|B_3(\Phi(t,x))|   \right) \\
		& \le CM\frac{ \Phi_2(t,x)}{\Phi_1(t,x)}. 
	\end{split}
\end{equation*} Therefore, we actually obtain \begin{equation*}
\begin{split}
	\frac{ \Phi_2(t,x)}{\Phi_1(t,x)} \le \frac{1}{2} \exp\left( CMt \right) < \frac34 
\end{split}
\end{equation*} on $t \in [0,T^*]$, as long as $T^* \le c/M$ for $c>0$ depending only on $C$. This bootstrap procedure allows us to get $\Phi_2/\Phi_1 < 3/4$ uniformly in $x \in \supp(\tht_0)\backslash\{(0,0)\}$ by the time $\min\{ T,c/M\} = T$. 
\end{proof}

The above lemma guarantees that on $[0,T]$, Key Lemma is applicable to points in $\supp(\tht(t,\cdot))$. Next, let us prove that by reducing $c>0$ if necessary, the bubbles are ``well-ordered'' with respect to the $x_{1}$-axis for $t \in [0,T]$ with $T \le c/M$. 
\medskip

\noindent \textbf{Claim I.} We have that \begin{equation}\label{eq:claim}
	\begin{split} 
		\sup_{x \in \Omega_n} \Phi_1(t,x) \leq 2\inf_{x \in \Omega_n} \Phi_1(t,x) \qquad \mbox{and} \qquad 2 \sup_{x \in \Omega_{n+1}} \Phi_1(t,x) \leq \inf_{x \in \Omega_n} \Phi_1(t,x)		
	\end{split}
\end{equation} uniformly for all $n \ge n_{0}$ and $t \in [0,T]$, by reducing $T$ to satisfy $T \le c/(1 + M)$ for some small absolute constant $c>0$. 

\medskip

\noindent For simplicity we let $\Omega_n := \mathrm{supp}\,(\tht^{(n)}_{0, loc})$ and
$$\widehat{\Phi}_j^n(t) := \sup_{x \in \Omega_n} \Phi_j(t,x)$$ for $j = 1, 2$. We can prove the \textbf{Claim I} inductively in $n$, using the Key Lemma which gives 
$$\left|\frac {u_1(t,x)}{x_1} - 12\int_{Q(x)} \frac { y_1 y_2 }{|y|^5} \theta(t,y) \mathrm{d}y\right| \le CM.$$ In the proof, we shall take $T>0$ smaller several times, but in a way which is independent of $n$. To begin with, 
for $x \in \Omega_{n_{0}}$ we have
$$\left| \frac {\mathrm{d}}{\mathrm{d}t} \log \Phi_1(t,x)\right| \le CM,$$
thus,
$$\log \Phi_1(t,x) - \log x_1 \geq -CMt.$$
We also have
$$\frac {\mathrm{d}}{\mathrm{d}t} \log \widehat{\Phi}^{n_{0}}_1(t) - \frac {\mathrm{d}}{\mathrm{d}t} \log \Phi_1(t,x) \le 2CM,$$
and thus,
$$\log \widehat{\Phi}^{n_{0}}_1(t) - \log \Phi_1(t,y) \leq 2CMt + (\log \widehat{x}^{n_{0}}_1 - \log x_1).$$
Since $\widehat{x}^{n_{0}}_1 / x_1 < 3/2$, we can take $T>0$ sufficiently small such that
$$2CMT + (\log \widehat{x}^{n_{0}}_1 - \log x_1) \leq \log 2,$$
which implies that
$$\widehat{\Phi}_1^{n_{0}}(t) \leq 2\inf_{x \in \Omega_{n_0}} \Phi_1(t,x)$$
for all $t \in [0,T]$. Indeed, it suffices to take $T= c/(1+M)$ with a small absolute constant $c>0$. 
Note that for $x \in \Omega_{n_{0}}$,
$$\frac {\mathrm{d}}{\mathrm{d}t} \log \Phi_1(t,x) - \frac {\mathrm{d}}{\mathrm{d}t} \log \widehat{\Phi}^{n_{0}+1}_1(t) \geq - 12\int_{\Omega_{n_{0}}} \frac { \Phi_1(t,y) \Phi_2(t,y) }{|\Phi(t,y)|^5} \theta_0(y) \mathrm{d}y - 2CM.$$
With the above estimates, we have
\begin{align*}
	\int_{\Omega_{n_0}} \frac { \Phi_1(t,y) \Phi_2(t,y) }{|\Phi(t,y)|^5} \theta_0(y) \mathrm{d}y &\leq \Big(\sup_{x \in \Omega_{n_{0}}} \frac {x_1}{\Phi_1(t,x)}\Big)^3 \int_{\Omega_{n_{0}}} \frac {\theta_0(y)}{y_1^3} \mathrm{d}y \leq C_0 e^{3CMT}.
\end{align*}
Using it, we obtain
$$\frac {\mathrm{d}}{\mathrm{d}t} \log \Phi_1(t,x) - \frac {\mathrm{d}}{\mathrm{d}t} \log \widehat{\Phi}^{n_0+1}_1(t) \geq - 12C_0 e^{3CMT} - 2CM$$
and
$$\log \Phi_1(t,x) - \log \widehat{\Phi}^{n_{0}+1}_1(t) \geq - 12C_0 e^{3CMT}t - 2CMt + (\log x_1 - \log \widehat{x}^{n_{0}+1}_1).$$
Since $x_1/\widehat{x}^{n_{0}+1}_1 > 2$, we can take $T>0$ sufficiently small such that
$$- 12C_0 e^{3CMT}T - 2CMT + (\log x_1 - \log \widehat{x}^{n_{0}+1}_1) \geq \log 2,$$
which implies that
\begin{equation}\label{bub_12}
	2 \widehat{\Phi}^{n_{0}+1}_1(t) \leq \inf_{x \in \Omega_1} \Phi_1(t,x)
\end{equation}
for all $t \in [0,T]$. Now let $x \in \Omega_{n_{0}+1}$. Then we have
$$ \frac {\mathrm{d}}{\mathrm{d}t} \log \Phi_1(t,x) \geq -CM,$$
thus,
$$\log \Phi_1(t,x) - \log x_1 \geq -CMt.$$ Since $\widehat{x}^{n_{0}+1}_1/x_1$ takes the same value as in the previous case, we see that
$$2CMT + (\log \widehat{x}^{n_{0}+1}_1 - \log x_1) \leq \log 2,$$
and therefore, we have 
$$\widehat{\Phi}_1^{n_{0}+1}(t) \leq 2\inf_{x \in \Omega_{n_{0}+1}} \Phi_1(t,x)$$
for all $t \in [0,T]$.
Note that by \eqref{bub_12},
$$\frac {\mathrm{d}}{\mathrm{d}t} \log \Phi_1(t,x) - \frac {\mathrm{d}}{\mathrm{d}t} \log \widehat{\Phi}^{n_{0}+2}_1(t) \geq - 12\int_{\Omega_{n_{0}+1}} \frac { \Phi_1(t,y) \Phi_2(t,y) }{|\Phi(t,y)|^5} \theta_0(y) \mathrm{d}y - 2CM.$$
With the above estimates, we have
\begin{align*}
	\int_{\Omega_{n_{0}+1}} \frac { \Phi_1(t,y) \Phi_2(t,y) }{|\Phi(t,y)|^5} \theta_0(y) \mathrm{d}y &\leq \Big(\sup_{x \in \Omega_{n_{0}+1}} \frac {x_1}{\Phi_1(t,x)}\Big)^{n_{0}+2} \int_{\Omega_{n_{0}+1}} \frac {\theta_0(y)}{y_1^{n_{0}+2}} \mathrm{d}y \leq C_0 e^{3CMT}.
\end{align*}
Using it, we obtain
$$\frac {\mathrm{d}}{\mathrm{d}t} \log \Phi_1(t,x) - \frac {\mathrm{d}}{\mathrm{d}t} \log \widehat{\Phi}^{n_{0}+2}_1(t) \geq - 12C_0 e^{3CMT} - 2CM$$
and
$$\log \Phi_1(t,x) - \log \widehat{\Phi}^{n_{0}+2}_1(t) \geq - 12C_0 e^{3CMT}t - 2CMt + (\log x_1 - \log \widehat{x}^{n_{0}+2}_1).$$
Since $x_1/\widehat{x}^{n_{0}+2}_1 > 2$ is the same value as in the previous case, it follows
$$- 12C_0 e^{3CMT}T - 2CMT + (\log x_1 - \log \widehat{x}^{n_{0}+2}_1) \geq \log 2$$
and
$$2 \widehat{\Phi}^{n_{0}+2}_1(t) \leq \inf_{x \in \Omega_{n_{0}+1}} \Phi_1(t,x)$$
for all $t \in [0,T]$. Repeating this argument, one can finish the proof of \textbf{Claim I}. \qedsymbol

\medskip

\noindent \textbf{Claim II}. There exists $T>0$ and $C>0$ such that
$$\log \widehat{\Phi}_2^n(T) \leq \log \widehat{x}_2^n - 10 \sum_{n_{0} \leq j \leq n-1} \int_0^T \int_{\Omega_j} \frac { \Phi_1(t,y) \Phi_2(t,y) }{|\Phi(t,y)|^5} \theta_0(y) \mathrm{d}y \mathrm{d}t + CM$$
uniformly for all $n>n_{0}$. 

\medskip

\noindent Recall that 
$$\frac {u_2(x)}{x_2}  \le- 12\int_{Q(x)} \frac { y_1 y_2 }{|y|^5} \theta(t,y) \mathrm{d}y + CM \bigg( \log \frac {x_1}{x_2} \bigg),$$
if $\tht(y) = 0$ for $y$ satisfying $x_1/2 \leq y_1 \leq 2x_1$ and $2x_2 \leq y_2 \leq 1$. According to the order of the bubbles, we have for $x \in \Omega_n$ that
$$\int_{R(2\Phi(t,x))} \frac { y_1 y_2 }{|y|^5} \theta(t,y) \mathrm{d}y = \sum_{n_{0} \leq j \leq n-1} \int_{\Omega_j} \frac { \Phi_1(t,y) \Phi_2(t,y) }{|\Phi(t,y)|^5} \theta_0(y) \mathrm{d}y.$$
And note that
$$\sup_{2\Phi_2(t,x) \geq \widehat{\Phi}_2^n(t)} CM \bigg( \log \frac {\Phi_1(t,x)}{\Phi_2(t,x)} \bigg) \leq CM \bigg( \log \frac {2\widehat{\Phi}_1^n(t)}{\widehat{\Phi}_2^n(t)} \bigg).$$
Thus, we can see that
\begin{align*}
	\frac {\mathrm{d}}{\mathrm{d}t} \log \widehat{\Phi}_2^n(t) &\le- 12 \sum_{n_{0} \leq j \leq n-1} \int_{\Omega_j} \frac { \Phi_1(t,y) \Phi_2(t,y) }{|\Phi(t,y)|^5} \theta_0(y) \mathrm{d}y + CM \bigg( \log \frac {2\widehat{\Phi}_1^n(t)}{\widehat{\Phi}_2^n(t)} \bigg) \\
	&\leq - 12 \sum_{n_{0} \leq j \leq n-1} \int_{\Omega_j} \frac { \Phi_1(t,y) \Phi_2(t,y) }{|\Phi(t,y)|^5} \theta_0(y) \mathrm{d}y + CM \log (2\widehat{\Phi}_1^n(t)) - CM \log \widehat{\Phi}_2^n(t)
\end{align*}
and
\begin{align*}
	\log \widehat{\Phi}_2^n(t) &\leq \log \widehat{x}_2^n - 12 \sum_{n_{0} \leq j \leq n-1} \int_0^t \int_{\Omega_j} \frac { \Phi_1(\tau,y) \Phi_2(\tau,y) }{|\Phi(\tau,y)|^5} \theta_0(y) \mathrm{d}y \mathrm{d}\tau \\
	&\hphantom{\qquad\qquad} + CM \int_0^t \log (2\widehat{\Phi}_1^n(\tau)) \mathrm{d}\tau - CM \int_0^t \log \widehat{\Phi}_2^n(\tau) \mathrm{d}\tau.
\end{align*}
Note that
\begin{align*}
	\log (2\widehat{\Phi}_1^n(t)) &= \log 2 + \log \widehat{\Phi}_1^n(t) \\
	&\leq \log 2 + \log \widehat{x}_1^n + 12 \sum_{n_{0} \leq j \leq n-1} \int_0^t \int_{\Omega_j} \frac { \Phi_1(\tau,y) \Phi_2(\tau,y) }{|\Phi(\tau,y)|^5} \theta_0(y) \mathrm{d}y \mathrm{d}\tau + CMt,
\end{align*}
hence,
$$\int_0^t \log (2\widehat{\Phi}_1^n(\tau)) \mathrm{d}\tau \leq t \log 2 + t \log \widehat{x}_1^n + 12 t \sum_{n_{0} \leq j \leq n-1} \int_0^t \int_{\Omega_j} \frac { \Phi_1(\tau,y) \Phi_2(\tau,y) }{|\Phi(\tau,y)|^5} \theta_0(y) \mathrm{d}y \mathrm{d}\tau + \frac C2 Mt^2.$$
Therefore, with a sufficiently small $T>0$ (independent of $n$), we can complete the proof of \textbf{Claim II}. \qedsymbol

\medskip

\noindent \textbf{4. Almost invariant timescales}. We shall write $\Phi(t,\Omg_{n}) \sim \Omg_{n}$ if \begin{equation*}
	\begin{split}
		\supp( \Phi(t,\Omg_{n}) ) \subset B( (4^{-n-1},2^{-1}4^{-n-1}), 4^{-n-1} ). 
	\end{split}
\end{equation*} Here, $B( (4^{-n-1},2^{-1}4^{-n-1}), 4^{-n-1} )$ denotes the ball of radius $4^{-n-1}$ centered at $(4^{-n-1},2^{-1}4^{-n-1})$. Recall from the definition of initial data that \begin{equation*}
\begin{split}
	\Omg_{n} = B( (4^{-n-1},2^{-1}4^{-n-1}), 2^{-1}4^{-n-1} ). 
\end{split}
\end{equation*} An immediate consequence of $\Phi(t,\Omg_{n}) \sim \Omg_{n}$ is that once we define \begin{equation*}
\begin{split}
	I_{n}(t) = \int_{\Omg_{n}} \frac { \Phi_1(t,y) \Phi_2(t,y) }{|\Phi(t,y)|^5} \theta_0(y) \mathrm{d}y,
\end{split}
\end{equation*} we have \begin{equation*}
\begin{split}
	I_{n}(t) \ge a_{0} I_{n}(0)
\end{split}
\end{equation*} for some absolute constant $a_{0}$. The following claim gives the sharp bound on the ``almost invariant'' timescale for each bubble. 

\medskip

\noindent \textbf{Claim III}. For all $n\ge n_{0}$, we have \begin{equation*}
	\begin{split}
		\Phi(t,\Omg_{n}) \sim \Omg_{n}, \qquad \mbox{for} \qquad 0 \le t \le \min\{ T, \frac{c}{ M+\sum_{j=n_{0}}^{n-1} j^{-\alp} } \} =: T_{n}
	\end{split}
\end{equation*} with some constants $c,C>0$ independent of $n$. 

\medskip

\noindent This claim can be proved easily with an induction in $n$. In the base case $n=n_0$, we simply note that for $x \in \Phi(t,\Omg_{n_0})$, \begin{equation*}
	\begin{split}
		\left| \frac{u_j(t,x)}{x_j} \right| \le CM
	\end{split}
\end{equation*} from which the claim follows in this case. Assume that \textbf{Claim III} holds for all $n <n_{0}+k$ for some $k\ge1$. Note that using \textbf{Claim I} and the induction hypothesis, we have for $x \in \Phi(t, \Omg_{n_0+k})$ that \begin{equation*}
\begin{split}
	\left| \frac{u_j(t,x)}{x_j} \right| \le C\left(M + \sum_{\ell=0}^{k-1} I_{n_0+\ell} \right), \qquad 0 \le t \le T_{n_0+k-1}. 
\end{split}
\end{equation*} A simple application of Gronwall's inequality gives \textbf{Claim III}. \qedsymbol

We have proven that the $n$-th bubble remains almost invariant for $T_n$, which is bounded from below by \begin{equation*}
	\begin{split}
		T_{n} \ge \frac{c_0}{\sum_{j=n_{0}}^{n-1} j^{-\alp}  } \ge \frac{(1-\alp)c_0}{n^{1-\alp}} 
	\end{split}
\end{equation*} for all $n\ge N$ with some large $N$ depending only on $M,T$. Now, we observe that \begin{equation*}
\begin{split}
	\int_{0}^{T_{n}} I_{n}(t) \,\ud t \gtrsim T_{n}I_{n}(0) \gtrsim \frac{1}{n},
\end{split}
\end{equation*} with constants independent of $n$, recalling that $I_{n}(0) \gtrsim n^{-\alp}$. (We shall take $\alp$ close to $\frac{1}{2}$.) Hence, summation gives \begin{equation}\label{eq:c0}
\begin{split}
	\sum_{ k = \ell}^{n} I_{k}(0) T_{k} \ge c_{0}\left( \frac{1}{\ell} + \cdots + \frac{1}{n} \right) \ge \log\left( \frac{n}{\ell} \right)^{c_{0}}
\end{split}
\end{equation} for some absolute constant $c_{0}>0$, as long as $\ell>N$.

\medskip

\noindent \textbf{5. Norm inflation and conclusion the proof}. We are now in a position to complete the proof. For each $\ell>N$ and $n\gg\ell$ (so that $\log\left( \frac{n}{\ell} \right)^{c_0} \gg M$), we can bound for $x \in \Omg_{n}$ \begin{equation*}
	\begin{split}
		\log \frac{\widehat{\Phi}^n_{2}(T_{\ell})}{\widehat{x}^{n}_{2}} \le CM -10 \sum_{ k = \ell }^{n} I_{k}(0) T_{k}  \le\log \left( \frac{n}{\ell} \right)^{-c_{0}}.
	\end{split}
\end{equation*} In other words, we have the growth \begin{equation}\label{eq:growth}
\begin{split}
	\frac{\widehat{x}_2^{n}}{\widehat{\Phi}^{n}_{2}(T_{\ell})} \ge \left( \frac{n}{\ell} \right)^{c_{0}}. 
\end{split}
\end{equation} Now, we can write the solution in the form \begin{equation*}
\begin{split}
	\tht = \sum_{n=n_{0}}^{\infty} n^{-\alp} \tht^{(n)}, \qquad \tht^{(n)}(t,\Phi(t,x)) = \tht_{0,loc}^{(n)}(x), 
\end{split}
\end{equation*} so that the support of $\tht^{(n)}$ is disjoint from each other.  We now take $t = T_{\ell}$. Since $\tht^{(n)}(T_{\ell},\cdot) = 1$ in a region of area $\gtrsim 4^{-2n}$ and $\tht^{(n')} = 0$ for $n'\ne n$ in that region,  
with Hardy's inequality and \eqref{eq:growth}, we obtain that \begin{equation*}
\begin{split}
	\nrm{\tht^{(n)}(T_{\ell})}_{\dot{H}^{2}}^{2} \gtrsim   \left( \frac{n}{\ell} \right)^{2c_{0}}. 
\end{split}
\end{equation*} This estimate holds for all sufficiently large $n$. Then \begin{equation*}
\begin{split}
	\nrm{\tht(T_{\ell})}_{H^2}^2 \ge \sum_{n\ge n_0} n^{-2\alp} \nrm{\tht^{(n)}(T_{\ell})}_{\dot{H}^{2}}^{2} \gtrsim \ell^{-2c_{0}} \sum_{n\gg \ell} n^{2c_{0}-2\alp} . 
\end{split}
\end{equation*} In the last inequality, since $c_0>0$ is an absolute constant, and we could have chosen $\alp = \frac{1}{2}(1+c_0)$. This gives a contradiction to $\nrm{\tht(T_\ell)}_{H^2}<\infty$ since $\sum_{n \gg \ell} n^{-1+c_0} = \infty$. \end{proof}

\begin{remark}
	The nonexistence of the solution in $W^{1,\infty}$ follows directly. Repeating the above process with Lemma~\ref{key_lem2} instead of Lemma~\ref{key_lem}, we obtain \eqref{eq:growth}. Since we clearly have that
	\begin{equation*}
		\begin{split}
			\| \theta^{(n)}(T_{\ell}) \|_{\dot{W}^{1,\infty}} \gtrsim \left( \frac{n}{\ell} \right)^{c_{0}},
		\end{split}
	\end{equation*}
	it follows \begin{equation*}
		\begin{split}
			\nrm{\tht(T_{\ell})}_{W^{1,\infty}} \ge \sum_{n\ge n_0} n^{-\alp} \| \theta^{(n)}(T_{\ell}) \|_{\dot{W}^{1,\infty}} \gtrsim \ell^{-c_{0}} \sum_{n\gg \ell} n^{c_{0}-\alp} .
		\end{split}
	\end{equation*}
	Therefore, for the same $\alp = \frac{1}{2}(1+c_0)$, we complete the proof.

\end{remark}

\section{Norm inflation for smooth data}\label{sec:norm-inflation}

We establish Theorem \ref{thm:main} in this section, by proving a quantitative norm inflation result for data obtained by truncating the data used in the proof of Theorem \ref{thm:nonexist}. 

\begin{proposition}[Quantitative norm inflation]
	We consider the $C^\infty$--smooth initial data \begin{equation}\label{eq:initial-smooth}
		\begin{split} 
			\tht_0^{(N)} :=  \sum_{n=n_{0}}^{N}  n^{-\alp} \tht^{(n)}_{0, loc} 
		\end{split}
	\end{equation} where $\phi, \alp, n_0$ are the same as in \eqref{eq:nonexist-data}. Then, there exists $N_0 \ge 1$ depending only on $\phi, n_0$ such that for all $N\ge N_0$, the unique local in time $C^\infty$--solution $\tht^{(N)}$ to (SQG) with initial data $\tht_0^{(N)}$ exists on the time interval $[0,T^*]$ for some $0<T^*\le T_N$ and satisfies
\begin{equation}\label{eq:norm-inflation-smooth}
	\begin{split}
		\nrm{\tht_0^{(N)}}_{H^2 \cap W^{1,\infty}} \le \eps, \qquad  \sup_{t \in [0,T^*]}\nrm{\tht^{(N)}(t)}_{H^{2}}>M_N,
	\end{split}
\end{equation}  where \begin{equation}\label{eq:MNTN}
\begin{split}
	M_N := \frac{c_0}{2} \ln N, \qquad T_N := \frac{1}{M_N \ln M_N}
\end{split}
\end{equation} with $c_0>0$ from \eqref{eq:c0}. 
\end{proposition}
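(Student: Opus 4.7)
The plan is to run a bootstrap version of the contradiction argument from the proof of Theorem~\ref{thm:nonexist}, now taking the hypothetical $H^2$-bound to be $M_N$ itself. Since $\tht_0^{(N)}$ is a finite sum of smooth bumps, the classical $H^s$-theory ($s>2$) combined with a logarithmic Kato--Ponce refinement (which propagates higher Sobolev norms as long as $\|\tht\|_{H^2}$ stays bounded) yields a unique smooth solution on a maximal interval. Let $T^* \in (0, T_N]$ be the supremum of those $T$ for which this smooth solution exists on $[0, T]$ and satisfies $\sup_{[0, T]} \|\tht^{(N)}(t)\|_{H^2} \leq M_N$; note $T^*>0$ since $\|\tht_0^{(N)}\|_{H^2}<\eps \ll M_N$. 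Either the bootstrap bound fails at some $t \leq T_N$, in which case \eqref{eq:norm-inflation-smooth} already holds, or $T^* = T_N$ with $\sup_{[0, T_N]} \|\tht^{(N)}\|_{H^2} \leq M_N$. We assume the latter and derive a contradiction.

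Under this hypothesis the Key Lemma (Lemma~\ref{key_lem}) inputs are uniformly bounded by $M_N$, and since $T_N = 1/(M_N \log M_N) \ll 1/M_N$ for $N$ large, the requirement $T \leq c/(1+M)$ used throughout Claims~I--III in Section~\ref{sec:nonexist} is satisfied with $(M,T)=(M_N,T_N)$. Those claims therefore apply verbatim on $[0, T_N]$, uniformly over $n_0 \leq n \leq N$, producing the bubble ordering, the squeezing bound
\begin{equation*}
\log \frac{\widehat{\Phi}_2^n(T_N)}{\widehat{x}_2^n} \leq C M_N - 10 \sum_{j=n_0}^{n-1} \int_0^{T_N} I_j(t)\, \mathrm{d}t,
\end{equation*}
and the almost invariant timescales $T_n = \min\{T_N,\, c/(M_N + \sum_{j<n} j^{-\alpha})\}$ on which $I_n(t) \geq a_0 I_n(0)$.

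The quantitative heart of the proof is to verify that $\sum_{k=n_0}^{N-1} I_k(0) T_k$ still grows like $\log N$ in spite of two truncations: the cutoff at $k=N$, and the pinning $T_k = T_N$ (instead of the sharper $c/k^{1-\alpha}$) in the small-$k$ regime. Splitting at the threshold $k^* \sim (M_N \log M_N)^{1/(1-\alpha)}$: for $k > k^*$ one has $T_k \sim 1/k^{1-\alpha}$ and hence $I_k(0) T_k \gtrsim 1/k$, so
\begin{equation*}
\sum_{k^* < k \leq N-1} I_k(0) T_k \gtrsim \log(N/k^*) = \log N - O(\log \log N),
\end{equation*}
while the range $n_0 \leq k \leq k^*$ contributes non-negatively. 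Thus $\sum_{k=n_0}^{N-1} I_k(0) T_k \geq c_0 \log N$ with $c_0$ the suitably small absolute constant of \eqref{eq:c0}. Plugging into the squeezing bound, the calibration $M_N = (c_0/2) \log N$ is tuned so that $C M_N$ is strictly dominated by $10 c_0 \log N$ (once $c_0$ is taken small relative to the Key Lemma constant $C$), giving for $n$ in a range like $[N/2, N]$ the growth
\begin{equation*}
\frac{\widehat{x}_2^n}{\widehat{\Phi}_2^n(T_N)} \geq N^{c_0}.
\end{equation*}
As in the endgame of Theorem~\ref{thm:nonexist}, Hardy's inequality applied bubble-by-bubble converts this into $\|\tht^{(n)}(T_N)\|_{\dot{H}^2}^2 \gtrsim N^{2 c_0}$ on that range, and summation with $\alpha=(1+c_0)/2$ yields
\begin{equation*}
\|\tht^{(N)}(T_N)\|_{H^2}^2 \gtrsim \sum_{N/2 \leq n \leq N} n^{-2\alpha} N^{2c_0} \gtrsim N^{c_0},
\end{equation*}
which for $N \geq N_0$ far exceeds $M_N^2 = (c_0/2)^2 (\log N)^2$, contradicting the bootstrap hypothesis. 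The data bound $\|\tht_0^{(N)}\|_{H^2 \cap W^{1,\infty}} \leq \|\tht_0\|_{H^2 \cap W^{1,\infty}} < \eps$ is inherited directly from the construction in Theorem~\ref{thm:nonexist}.

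The main obstacle is the correct calibration of $T_N$ against $M_N$: because the small-$k$ bubbles are pinned at $T_k = T_N$ rather than the sharper $c/k^{1-\alpha}$ of the nonexistence argument, their contribution to the summation is weaker, and one must check that the range $k>k^*$ alone still recovers a full $\log N$. The choice $T_N = 1/(M_N \log M_N)$ is tuned so that the lost factor is only $\log M_N = O(\log \log N)$, which is swamped by $\log N$; any cruder choice (e.g.\ $T_N = 1/M_N^2$) would cost a whole factor of $\log N$ and destroy the contradiction. This tight calibration is what dictates the precise form of $T_N$ in \eqref{eq:MNTN}.
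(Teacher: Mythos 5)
Your proposal is correct and follows essentially the same route as the paper: a bootstrap contradiction at threshold $M_N$, continuation past $T_N$ via a higher-order a priori estimate with logarithmic Sobolev control, re-use of Lemma \ref{lem:basic} and Claims I--III on all of $[0,T_N]$ (valid since $T_N \ll 1/M_N$), and the squeezing-plus-Hardy endgame with $\alpha=(1+c_0)/2$. The only differences are cosmetic --- you evaluate at $t=T_N$ and sum over $n\in[N/2,N]$, whereas the paper evaluates at $t=T_{\ell_N}$ with $\ell_N=M_N^3$ and sums over $n\gtrsim \ell_N\exp(c_0^{-1}M_N)$ --- though be careful that your remark about absorbing $CM_N$ by ``taking $c_0$ small relative to $C$'' only works if the gain $10\sum_k I_k(0)T_k$ is measured by the true squeezing rate while the shrunk $c_0$ enters only the definition of $M_N$; shrinking $c_0$ in both places leaves the ratio unchanged.
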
 
\begin{proof}  We shall establish the proposition with a contradiction argument: let $0<T^*\le +\infty$ be the lifespan of the smooth solution associated with the initial data $\tht_0^{(N)}$ and assume that \begin{equation*}
		\begin{split}
			\nrm{\tht^{(N)}}_{L^\infty([0, \min\{ T^*, T_N \} ]; H^2)} \le M_N. 
		\end{split}
	\end{equation*} Under this contradiction hypothesis, we can actually prove that $T^*>T_N$, so that  \begin{equation}\label{eq:contradiction}
		\begin{split}
			\nrm{\tht^{(N)}}_{L^\infty([0,T_N ]; H^2)} \le M_N. 
		\end{split}
	\end{equation} This is simply because the $H^2$-norm gives a blow-up criterion. To illustrate this point, we estimate the $H^3$-norm of $\tht := \tht^{(N)}$ on $[0,T_N]$:\footnote{For simplicity, from now on we shall suppress from writing out the dependence of the solution $\tht$ in $N$.} from the equation for $\lap\tht$ \begin{equation*}
		\begin{split}
			\rd_t \lap\tht + u\cdot\nb \lap\tht + \lap u\cdot\nb\tht + 2\sum_{i=1,2} \rd_{i} u \cdot \nb \rd_i \tht = 0, 
		\end{split}
	\end{equation*} we estimate for $j = 1, 2$ \begin{equation*}
		\begin{split}
			\frac{1}{2} \frac{\ud}{\ud t} \nrm{\rd_{j}\lap\tht}_{L^2}^{2} \le C(\nrm{\nb u}_{L^\infty} + \nrm{\nb \tht}_{L^\infty}) \nrm{\rd_{j}\lap\tht}_{L^2}^{2} + C\nrm{\tht}_{H^2} \nrm{\tht}_{H^3}^2. 
		\end{split}
	\end{equation*} Here, we have used $L^4$ boundedness of the Riesz operator $\tht\mapsto u$ to bound \begin{equation*}
		\begin{split}
			\nrm{\nb^2 u}_{L^{4}} + \nrm{\nb^{2} \tht}_{L^{4}} \le C \nrm{\tht}_{H^2}^{\frac12}\nrm{\tht}_{H^{3}}^{\frac12}.
		\end{split}
	\end{equation*} Next, we use the logarithmic Sobolev inequality \begin{equation*}
		\begin{split}
			\nrm{\nb \tht}_{L^{\infty}} \le C \nrm{\tht}_{H^2} \log\left( 10 + \frac{ \nrm{\tht}_{H^3}}{\nrm{\tht}_{H^2}} \right)
		\end{split}
	\end{equation*} and \begin{equation*}
		\begin{split}
			\nrm{\nb u}_{L^{\infty}} \le C \nrm{u}_{H^2} \log\left( 10 + \frac{ \nrm{u}_{H^3}}{\nrm{u}_{H^2}} \right) \le C \nrm{\tht}_{H^2} \log\left( 10 + \frac{ \nrm{\tht}_{H^3}}{\nrm{\tht}_{H^2}} \right)
		\end{split}
	\end{equation*}(we have used the \textit{lower bound} $\nrm{u}_{H^2}\ge C\nrm{\tht}_{H^2}$). Lastly,  using $\nrm{\tht}_{H^2}\le M_N$, we  may deduce the \textit{a priori estimate} \begin{equation*}
		\begin{split}
			\frac{\ud}{\ud t} \nrm{\tht}_{H^3}^{2} \le C M_N\log\left( 10 +  \nrm{\tht}_{H^3}\right) \nrm{\tht}_{H^3}^{2}
		\end{split}
	\end{equation*} which shows that the $H^{3}$--norm of $\tht$ must remain finite up to $t=T_N$. Higher norms of $\tht$ can be similarly controlled, so that the solution $\tht$ remains $C^\infty$--smooth up to $t=T_N$. 

	\medskip
	
	\noindent In the following argument, $N_0 \gg n_0$ will be taken to be sufficiently large (but in a way depending only on a few absolute constants) whenever it becomes necessary. Recall that we are assuming $N\ge N_0$. The following argument is mainly a repetition of the proof of Theorem \ref{thm:nonexist} above. For convenience, let us fix \begin{equation}\label{eq:lN}
		\begin{split}
			\ell_N := M_N^3.
		\end{split}
	\end{equation} Then, note from the definition of $M_N$ in \eqref{eq:MNTN} that $n_0 \ll \ell_N \ll N$. Here and in the following, we write $A \ll B$ if $A/B \rightarrow 0$ as $N\to\infty$, where $A$ and $B$ are some positive expressions involving $N$. 
	
	Observe that the solution $\tht$ defined on $[0,T_N]$ satisfies the properties stated in Lemma \ref{lem:basic} and \textbf{Claim I, II, III} on the \textit{entire} time interval $[0,T_N]$ (by taking $N_0$ larger if necessary), simply because we have \begin{equation*}
		\begin{split}
			T_N \ll \frac{1}{M_N}
		\end{split}
	\end{equation*} from our choice of $T_N$ in \eqref{eq:MNTN}. As in the above, we write the solution in the form \begin{equation*}
	\begin{split}
		\tht = \sum_{n=n_{0}}^{N} n^{-\alp}\tht^{(n)}, \qquad \tht^{(n)}(t,\Phi(t,x)) = \tht_{0,loc}^{(n)}(x), 
	\end{split}
\end{equation*} and $\tht^{(n)}$ will be referred to as the $n$-th bubble. 
	Then, for any $\ell_N \le k \le N$, we have that the invariant timescale $T_k$ for the $k$-th bubble satisfies \begin{equation*}
		\begin{split}
			T_k \le T_N \qquad \mbox{and} \qquad  T_k \gtrsim \frac{1}{M_N + \sum_{j=n_0}^{k-1} j^{-\alp}} \gtrsim k^{\alp-1}.
		\end{split}
	\end{equation*} We have used that $\alp$ is close to $\frac12$. Now we consider the values of $n$ satisfying \begin{equation}\label{eq:n-cond}
	\begin{split}
		n \ge C \ell_N \exp(c_0^{-1}M_N)
	\end{split}
\end{equation} for a sufficiently large absolute constant $C>0$. Then, at $t = T_{\ell_N}$, we obtain similarly as before \begin{equation*}
\begin{split}
	\nrm{ \tht^{(n)}(T_{\ell_N})}_{\dot{H}^2}^2 \gtrsim \left( \frac{n}{\ell_N} \right)^{2c_0}
\end{split}
\end{equation*} whenever $n\le N$ satisfies \eqref{eq:n-cond}. Hence \begin{equation*}
\begin{split}
	\nrm{\tht(T_{\ell_N})}_{H^2}^2 \gtrsim \sum_{n = 1+\lfloor C \ell_N \exp(c_0^{-1}M_N) \rfloor }^{N} n^{-2\alp}  \left( \frac{n}{\ell_N} \right)^{2c_0} \gtrsim \ell_N^{-2c_0} N^{1-2\alp+2c_0} \gg M_N^{2}, 
\end{split}
\end{equation*} recalling the definitions of $M_N$ and $\ell_N$. We have used that $N \gg \ell_N \exp(c_0^{-1}M_N)$ to derive the last inequality. In particular, for all sufficiently large $N$, we obtain \begin{equation*}
\begin{split}
	\nrm{\tht(T_{\ell_N})}_{H^2} > M_N,
\end{split}
\end{equation*} which is a contradiction. This finishes the proof. 
\end{proof}

\section*{Acknowledgement}

\noindent IJ has been supported by the Samsung Science and Technology Foundation under Project Number SSTF-BA2002-04 and the New Faculty Startup Fund from Seoul National University.

\bibliographystyle{amsplain}


\end{document}